\documentclass{amsart}

\usepackage{amssymb}
\usepackage[cp1250]{inputenc}
\usepackage{picture}
\usepackage{color}
\usepackage{tikz}
\usepackage{tikz}
\usetikzlibrary{matrix}

\newtheorem{thm}{Theorem}[section]
\newtheorem{lem}[thm]{Lemma}
\newtheorem{exa}[thm]{Example}

\newtheorem{cor}[thm]{Corollary}

\author{Jacek Marchwicki}
\address{
Faculty of Mathematics and Computer Science,
University of Warmia and Mazury in Olsztyn,
S\l oneczna 54,
10-710 Olsztyn,
Poland}
\email {jacek.marchwicki@uwm.edu.pl}

\title[Semi-fast convergent representations of purely atomic finite measures]{Semi-fast convergent representations of purely atomic finite measures}

\subjclass[2010]{Primary: 40A05 ; Secondary: 11K31} 
\keywords{purely atomic measure, achievement set, set of subsums, absolutely convergent series, complete sequence, Central Cantor set, semi-fast convergence}

\begin{document}

\begin{abstract}
The paper is dedicated to calculate the number of semi-fast convergent representations of a set of subsums. The formula for specific irreductible sequences is obtained. We study the Central Cantor sets and consider complete sequences of natural numbers. 
\end{abstract}

\maketitle 

\section{Introduction}
Let $(x_n)$ be a summable sequence. By the achievement set of the sequence $(x_n)$ we mean the set of all subsums of the series $\sum_{n=1}^{\infty}x_n$, that is
$$
E(x_n)=\{\sum_{n\in A}x_n : A\subset\mathbb{N}\}=\{\sum_{n=1}^\infty\varepsilon_nx_n:(\varepsilon_n)\in\{0,1\}^\mathbb{N}\}.
$$ 
In the whole paper we assume that the considered sequences $(x_n)$ are non-increasing and summable. 

Let $\mu$ be a purely atomic finite measure defined on $\mathbb{N}$ and $n$ be its $n$-th largest atom. If we put $x_n=\mu(\{n\})$ for all $n$ then the range of $\mu$ - $rng(\mu)$ is equal to the achievement set $E(x_n)$. 

The first paper devoted to the sets of subsums was \cite{Kakeya}, where the Author proved that 
\begin{thm}\label{Kakeya}
The following assertions hold
\begin{itemize}
\item[(i)] $E(x_n)$ is a compact perfect or finite set,
\item[(ii)] If $x_{n}>\sum_{i>n}x_{i}$ for all sufficiently large $n$'s,
then $E(x_{n})$ is homeomorphic to the ternary Cantor set $C$,
\item[(iii)] The inequality $x_{n}\leq \sum_{i>n}x_{i}$ holds for all sufficiently large $%
n $'s if and only if $E(x_{n})$ is a finite union of closed intervals. Moreover the inequality $x_n\leq\sum_{i>n}x_i$ is satisfied for all $n$ iff $E(x_n)$ is an interval. 
\end{itemize}
\end{thm}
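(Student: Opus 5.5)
The plan is to work throughout with the map $\Phi:\{0,1\}^{\mathbb{N}}\to\mathbb{R}$, $\Phi((\varepsilon_n))=\sum\varepsilon_nx_n$, whose image is $E(x_n)$. Write $r_n=\sum_{i>n}x_i$ for the tails, and let $E_n=E(x_{n+1},x_{n+2},\dots)$. Two facts will be used repeatedly. First, $E_n\subset[0,r_n]$. Second, there is the decomposition
$$E(x_n)=\bigcup_{F\subset\{1,\dots,n\}}\Big(\sum_{i\in F}x_i+E_n\Big).$$

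For (i), give $\{0,1\}^{\mathbb{N}}$ the product topology, so it is compact. The map $\Phi$ is continuous because $|\Phi(\varepsilon)-\Phi(\delta)|\le r_n$ whenever $\varepsilon,\delta$ agree on the first $n$ coordinates. Hence $E(x_n)$ is compact.

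If only finitely many $x_n$ are nonzero, $E(x_n)$ is finite. Otherwise, take $s=\Phi(\varepsilon)$. If $\varepsilon_n=0$ for infinitely many $n$ with $x_n>0$, switch such an $\varepsilon_n$ to $1$. This gives points $s+x_n\in E(x_n)$, distinct from $s$, with $x_n\to0$. If instead $\varepsilon_n=1$ for all large $n$ with $x_n>0$, switch those coordinates to $0$ and get $s-x_n$. In either case $s$ is a limit of other points, so $E(x_n)$ is perfect.

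For (ii), assume $x_n>r_n$ for $n\ge N$, and assume $x_n>0$ (otherwise the hypothesis fails). The first step is to show that $\Phi$ is injective on the sequences sharing a fixed prefix of length $N-1$. Indeed, if $\varepsilon\ne\delta$ first differ at some $k\ge N$, with $\varepsilon_k=1$, then
$$\Phi(\varepsilon)-\Phi(\delta)\ge x_k-r_k>0.$$
So $E_{N-1}$ is a continuous injective image of the Cantor space, hence homeomorphic to $C$.

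Then $E(x_n)$ is a finite union of translates of $E_{N-1}$. Each translate is a Cantor set, so the union is compact, perfect, nonempty and metrizable. It is also totally disconnected: it is a finite union of nowhere dense closed sets, so it is nowhere dense in $\mathbb{R}$, and a nowhere dense subset of $\mathbb{R}$ contains no interval. By Brouwer's characterization of the Cantor set, $E(x_n)\cong C$.

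For (iii), the main lemma is this: if $x_n\le r_n$ for all $n\ge N$, then $E_{N-1}=[0,r_{N-1}]$. I will prove it with the greedy algorithm. Given $t\in[0,r_{N-1}]$, scan $n=N,N+1,\dots$ and put $\varepsilon_n=1$ exactly when $x_n$ plus the sum already chosen does not exceed $t$. The invariant to check, by induction, is that after step $n$ the remainder $t-\sum_{i\le n}\varepsilon_ix_i$ lies in $[0,r_n]$.
\begin{itemize}
\item If $x_n$ is taken, the new remainder is at most $r_{n-1}-x_n=r_n$.
\item If $x_n$ is not taken, the remainder $\rho$ satisfies $\rho<x_n\le r_n$.
\end{itemize}
Since $r_n\to0$, the remainder tends to $0$, so $t\in E_{N-1}$. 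Applying the lemma with $N=1$ gives the "moreover" direction: $x_n\le r_n$ for all $n$ implies $E(x_n)$ is an interval. By the decomposition, the eventual inequality gives that $E(x_n)$ is a finite union of intervals.

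For the converses, suppose first that $x_n>r_n$ for infinitely many $n$. At such an $n$, the points $\sum_{i\in F}x_i$ with $F\subset\{1,\dots,n\}$ and $n\in F$, together with the corresponding ones with $n\notin F$, produce gaps. Specifically, take the largest subsum of $x_1,\dots,x_{n-1}$ that lies just below a given point. Then the open interval $(\sigma+r_n,\sigma+x_n)$ misses $E(x_n)$, where $\sigma=\sum_{i\in F}x_i$ for some $F\subset\{1,\dots,n-1\}$ chosen to make this interval avoid all other translates. The simplest choice is to take $F$ so that $\sigma+x_n$ is the smallest translate start exceeding $\sigma+r_n$. These gaps have length at most $x_n\to0$, and there are infinitely many of them, accumulating. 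So $E(x_n)$ cannot be a finite union of intervals, since such a union has only finitely many gaps.

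For the "iff interval" part, suppose $x_n>r_n$ for a single $n$. The point $r_n$ belongs to $E(x_n)$ and $x_n\in E(x_n)$, but $(r_n,x_n)\cap E(x_n)=\emptyset$. To see this, note that any subsum either uses some $x_i$ with $i\le n$, and so is at least $x_n$ because the sequence is non-increasing, or is at most $r_n$. This gap shows $E(x_n)$ is not an interval.

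The main obstacle is the converse for the finite union of intervals. One must verify carefully that infinitely many genuine gaps appear. The cleanest route is to do it for the tail sets $E_{n-1}$, where $(r_n,x_n)$ is a gap by the argument just given, and then argue that the finitely many translates cannot fill all of these gaps.
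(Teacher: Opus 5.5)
The paper does not prove this statement; it quotes it from Kakeya, so there is no proof in the paper to compare with. Judged on its own, your arguments for (i), (ii), the greedy lemma, and the characterization of when $E(x_n)$ is an interval are correct.

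The genuine gap is the remaining converse in (iii): that $x_n>r_n$ for infinitely many $n$ prevents $E(x_n)$ from being a finite union of closed intervals. What you wrote there does not prove it:
\begin{itemize}
\item the prescription ``take $F$ so that $\sigma+x_n$ is the smallest translate start exceeding $\sigma+r_n$'' is circular, since $\sigma$ itself depends on $F$;
\item you never check that $(\sigma+r_n,\sigma+x_n)$ avoids the other translates;
\item you never check that the intervals obtained for different $n$ are distinct.
\end{itemize}
You yourself leave this as ``the main obstacle''. The alternative you sketch, finding gaps in the tail sets and arguing that the translates cannot fill them, would also need a real argument. Translates of a tail set can cover some of its gaps. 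For example, for $(\tfrac32,1,1,\tfrac14,\tfrac18,\dots)$ the gap $(\tfrac32,2)$ of $E_1$ is covered by $\tfrac32+E_1$.

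The missing step is already in your next paragraph. For every $n$ with $x_n>r_n$, the interval $(r_n,x_n)$ misses the whole set $E(x_n)$, not just $E_{n-1}$. A subsum using some index $i\le n$ is at least $x_i\ge x_n$, and any other subsum is at most $r_n$. So simply take $F=\emptyset$.

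If $n<m$ with $x_n>r_n$ and $x_m>r_m$, then $x_m\le r_n$. Hence $(r_m,x_m)$ lies to the left of $(r_n,x_n)$. You therefore get infinitely many pairwise disjoint gaps $(r_n,x_n)$, with endpoints in $E(x_n)$, accumulating at $0$. A finite union of closed intervals has only finitely many gaps, which gives the contradiction.

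Equivalently, since $E(x_n)$ is perfect, $0$ would lie in a component $[0,a]$ with $a>0$. That interval contains $(r_n,x_n)$ as soon as $x_n<a$. This closes (iii) with no analysis of translates at all.
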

If the conditions $(ii)$ either $(iii)$ holds for each $n$ then the sequence $(x_n)$ (also the series $\sum_{n=1}^{\infty}x_n$) is called fast or slow convergent respectively.  

Guthrie and Nymann \cite{GN} proved that there is one more possible form of the set of subsums - so called Cantorval, which is homeomorphic to the set 
\begin{equation*}
[0,1]\setminus \bigcup_{n\in \mathbb{N}}U_{2n},
\end{equation*}%
where $U_{n}$ denotes the union of $2^{n-1}$ open middle thirds which are
removed from the interval $[0,1]$ at the $n$-th step in the construction of
the classic Cantor ternary set $C$. 

\begin{figure}[h]
\centering

\begin{center}
\begin{tikzpicture}[x=14cm,y=1.3cm]

\draw[line width=2pt] (0,0) -- (1,0);
\node[below] at (0,0) {$0$};
\node[below] at (1,0) {$1$};

\draw[line width=2pt] (0,-1) -- (1/3,-1);
\draw[line width=2pt,red] (1/3,-1) -- (2/3,-1);
\draw[line width=2pt] (2/3,-1) -- (1,-1);

\node[below] at (0,-1) {$0$};
\node[below] at (1/3,-1) {$\frac13$};
\node[below] at (2/3,-1) {$\frac23$};
\node[below] at (1,-1) {$1$};

\draw[line width=2pt] (0,-2) -- (1/9,-2);
\draw[line width=2pt] (2/9,-2) -- (1/3,-2);

\draw[line width=2pt,red] (1/3,-2) -- (2/3,-2);

\draw[line width=2pt] (2/3,-2) -- (7/9,-2);
\draw[line width=2pt] (8/9,-2) -- (1,-2);

\node[below] at (0,-2) {$0$};
\node[below] at (1/9,-2) {$\frac19$};
\node[below] at (2/9,-2) {$\frac29$};
\node[below] at (1/3,-2) {$\frac13$};
\node[below] at (2/3,-2) {$\frac23$};
\node[below] at (7/9,-2) {$\frac79$};
\node[below] at (8/9,-2) {$\frac89$};
\node[below] at (1,-2) {$1$};


\draw[line width=2pt] (0,-3) -- (1/27,-3);
\draw[line width=2pt,red] (1/27,-3) -- (2/27,-3);
\draw[line width=2pt] (2/27,-3) -- (1/9,-3);

\draw[line width=2pt] (2/9,-3) -- (7/27,-3);
\draw[line width=2pt,red] (7/27,-3) -- (8/27,-3);
\draw[line width=2pt] (8/27,-3) -- (1/3,-3);

\draw[line width=2pt,red] (1/3,-3) -- (2/3,-3);

\draw[line width=2pt] (2/3,-3) -- (19/27,-3);
\draw[line width=2pt,red] (19/27,-3) -- (20/27,-3);
\draw[line width=2pt] (20/27,-3) -- (7/9,-3);

\draw[line width=2pt] (8/9,-3) -- (25/27,-3);
\draw[line width=2pt,red] (25/27,-3) -- (26/27,-3);
\draw[line width=2pt] (26/27,-3) -- (1,-3);

\node[below] at (0,-3) {$0$};
\node[below] at (1/9,-3) {$\frac19$};
\node[below] at (2/9,-3) {$\frac29$};
\node[below] at (1/3,-3) {$\frac13$};
\node[below] at (2/3,-3) {$\frac23$};
\node[below] at (7/9,-3) {$\frac79$};
\node[below] at (8/9,-3) {$\frac89$};
\node[below] at (1,-3) {$1$};


\draw[line width=2pt] (0,-4) -- (1/81,-4);
\draw[line width=2pt] (2/81,-4) -- (1/27,-4);

\draw[line width=2pt] (2/27,-4) -- (7/81,-4);
\draw[line width=2pt] (8/81,-4) -- (1/9,-4);

\draw[line width=2pt] (2/9,-4) -- (19/81,-4);
\draw[line width=2pt] (20/81,-4) -- (7/27,-4);

\draw[line width=2pt] (8/27,-4) -- (25/81,-4);
\draw[line width=2pt] (26/81,-4) -- (1/3,-4);

\draw[line width=2pt,red] (1/27,-4) -- (2/27,-4);
\draw[line width=2pt,red] (7/27,-4) -- (8/27,-4);
\draw[line width=2pt,red] (1/3,-4) -- (2/3,-4);
\draw[line width=2pt,red] (19/27,-4) -- (20/27,-4);
\draw[line width=2pt,red] (25/27,-4) -- (26/27,-4);

\draw[line width=2pt] (2/3,-4) -- (55/81,-4);
\draw[line width=2pt] (56/81,-4) -- (19/27,-4);

\draw[line width=2pt] (20/27,-4) -- (61/81,-4);
\draw[line width=2pt] (62/81,-4) -- (7/9,-4);

\draw[line width=2pt] (8/9,-4) -- (73/81,-4);
\draw[line width=2pt] (74/81,-4) -- (25/27,-4);

\draw[line width=2pt] (26/27,-4) -- (79/81,-4);
\draw[line width=2pt] (80/81,-4) -- (1,-4);

\node[below] at (0,-4) {$0$};
\node[below] at (1/9,-4) {$\frac19$};
\node[below] at (2/9,-4) {$\frac29$};
\node[below] at (1/3,-4) {$\frac13$};
\node[below] at (2/3,-4) {$\frac23$};
\node[below] at (7/9,-4) {$\frac79$};
\node[below] at (8/9,-4) {$\frac89$};
\node[below] at (1,-4) {$1$};

\end{tikzpicture}
\end{center}

\caption{Consecutive iterations in the construction of a Cantorval.
The red intervals mark the middle thirds that are removed in the classical construction of the ternary Cantor set.}
\end{figure}

It is known that a Cantorval is just a nonempty compact
set in $\mathbb{R}$, that it is the closure of its interior and both
endpoints of any nontrivial component are accumulation points of its trivial
components. Other topological characterizations of Cantorvals can be found
in \cite{BFPW} and \cite{MO}.

We will consider the following subsets of the achievement set $E(x_n)$: the countable set $F(x_n)$ of all finite subsums, $F_k(x_n)$- the finite set of subsums of $k$-initial terms and $E_k(x_n)$ by which we denote the achievement set for the tail $(x_n)_{n>k}$, that is 
$$F(x_n)=\{\sum_{n\in E}x_n :E\subset\mathbb{N}, \vert E\vert <\infty\};$$
$$F_k(x_n)=\{\sum_{n=1}^{k}\varepsilon_nx_n: (\varepsilon_n)\in\{0,1\}^{k}\};$$
$$E_k(x_n)=\{\sum_{n=k+1}^\infty\varepsilon_nx_n:(\varepsilon_n)\in\{0,1\}^\mathbb{N}\}.$$
For any $k$ we have the following decomposition of the achievement set
$$E(x_n)=F_k(x_n)+E_k(x_n).$$
We also denote $r_k=\sum_{n>k}x_n$ for each $k$, that is the sum of the $k$-th tail of the series.

It is clear that not every finite union of closed intervals, Cantor set or Cantorval is a set of subsums of some sequence. Clearly any achievement set $E(x_n)$ contains zero and is symmetric in the sense that there exists a number $t$
such that if $t-x\in E(x_n)$ then $t+x\in E(x_n)$ too. However the symmetry is not only the case and deeper studies of achievable sets, that is that which are achievement sets for some sequence, can be found in \cite{recover}.  

We study whether a given achievable set $E$ is obtained as a set of subsum for a unique non-increasing sequence of positive terms.  If the answer is negative we study the problem of number of different sequences with the same achievement set. For a given achievable set $E$ by its representation we call any sequence $(x_n)$ such that $E:=E(x_n)$. 
Let us start with an example \cite{recover}.
\begin{exa}\label{dwarozlaczne}\emph{
Let us consider the unite interval $E:=[0,1]$. Probably the simplest and the most popular is its binary representation $E(x_n)=[0,1]$ for  $x_n=\frac{1}{2^n}$.  By Steinhaus Theorem \cite{Steinhaus} we also know that the sum of two ternary Cantor sets $C+C$ is equal to $[0,2]$, so  $E(y_n)=[0,1]$, where $y_{2n-1}=\frac{1}{3^n}=y_{2n}$. One can easily observe, that the both sequences $(x_n)$ and $(y_n)$ are slowly convergent and $\sum_{n=1}^{\infty} x_n=\sum_{n=1}^{\infty} y_n=1$. Moreover $\{x_n : n\in\mathbb{N}\}\cap\{y_n : n\in\mathbb{N}\}=\emptyset$.}
\end{exa}
In Example \ref{dwarozlaczne} we showed that there exists two slowly convergent series with the same sum and no common terms. The following holds for interval-like achievement sets \cite{recover}.
\begin{thm}\label{intervalcharacterization}
Let $E$ be an achievable set. The following conditions are equivalent. 
\begin{itemize}
\item[(i)] $E$ is an interval;
\item[(ii)] there exist two sequences $(x_n)$ and $(y_n)$ such that $E=E(x_n)=E(y_n)$ and $\{x_n : n\in\mathbb{N}\}\cap\{y_n : n\in\mathbb{N}\}=\emptyset$;
\item[(iii)] for any sequence $(x_n)$ with $E=E(x_n)$, there exists $(y_n)$ such that $E=E(y_n)$ and  $\{x_n : n\in\mathbb{N}\}\cap\{y_n : n\in\mathbb{N}\}=\emptyset$;
\item[(iv)]  for any sequence $(x_n)$ with $E=E(x_n)$, there exists $(y_n)$ such that $E=E(y_n)$ and  $F(x_n)\cap F(y_n)=\emptyset$.
\end{itemize}
\end{thm}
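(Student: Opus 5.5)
The natural order is to prove (iv) $\Rightarrow$ (iii) $\Rightarrow$ (ii) $\Rightarrow$ (i) $\Rightarrow$ (iv). The first two implications are almost formal, the third is the real content, and the fourth is a construction.

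For (iv) $\Rightarrow$ (iii), note that $x_n, y_n \in F(x_n), F(y_n)$ respectively. Hence $F(x_n)\cap F(y_n)=\emptyset$ forces the term sets to be disjoint. For (iii) $\Rightarrow$ (ii), an achievable set has at least one representation, so we apply (iii) to it.

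For (ii) $\Rightarrow$ (i), suppose $E=E(x_n)=E(y_n)$ with disjoint term sets.

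1. First I will show that $x_1=y_1$ unless something degenerate happens, and this will give a contradiction.
2. Observe that $\max E=\sum x_n=\sum y_n=:s$.
3. Observe that the smallest positive element of $E$ is $\min\{x_k: k\}$ when the sequence is finite. In general, the structure near $0$ is governed by the tail.
4. The key tool is the gap structure. If $E$ is not an interval, then by Theorem \ref{Kakeya}(iii) some $n$ satisfies $x_n>r_n$ (the tail sum $\sum_{i>n}x_i$).
5. Take the first such $n$. The set $E=F_{n}+E_n$ then has a gap whose left endpoint is determined by the terms $x_1,\dots,x_n$ and $r_n$.
6. More concretely, let $g$ be the left end of the leftmost gap of $E$, i.e. the largest $g$ with $[0,g]\subset E$.

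For a non-increasing sequence I claim:
\begin{itemize}
\item $g=r_m$, where $m$ is the largest index with $x_m>r_m$ (the first gap arises from the last "fast" index);
\item the gap has length $x_m-r_m$;
\item its right endpoint $x_m$ is a term of the sequence.
\end{itemize}
Here $[0,r_m]=E_m$ is an interval because the tail is slowly convergent. The gap right endpoint $x_m$ is the least element of $E$ above $r_m$, since $x_i\ge x_m$ for $i\le m$. Applying the same description to $(y_n)$ gives $x_m=y_{m'}$, contradicting disjointness.

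If instead there are infinitely many $n$ with $x_n>r_n$, then $E$ has gaps accumulating at $0$. In that case I use the right endpoints of small gaps, each of which is of the form $x_n$ for infinitely many $n$ (those with $x_n>r_n$), and run the same argument on any one of them. The main obstacle is justifying rigorously that gap right endpoints near $0$ are exactly terms of the sequence. I will do this by showing that any element of $E$ in $(r_n,x_n]$ equals $x_n$: a subsum below $x_n$ cannot use any $x_i$ with $i\le n$ other than exactly $x_n$ with nothing else.

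For (i) $\Rightarrow$ (iv), let $E=[0,s]$ and $(x_n)$ be given. Since $F(x_n)$ is countable, I choose $(y_n)$ as a scaled variant, for example $y_n=\lambda s/2^n$ or the ternary-doubled sequence of Example \ref{dwarozlaczne} rescaled to sum $s$. These are interval representations by Theorem \ref{Kakeya}(iii).

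Finite subsums of a binary-type sequence are $s\cdot q$ with $q$ dyadic, so I instead take $y_n=s(1-t)t^{n-1}$ for a parameter $t\in[1/2,1)$; slow convergence holds exactly when $t\ge 1/2$. The finite subsums form a countable family $s\,p(t)$, where $p$ ranges over polynomials with integer coefficients that are nonzero for nonempty index sets. For each fixed $a\in F(x_n)$ with $a>0$ and each such $p$, the equation $s\,p(t)=a$ has finitely many roots $t$. This excludes only countably many $t$, so some $t\in[1/2,1)$ gives $F(x_n)\cap F(y_n)=\{0\}$. The common element $0$ is unavoidable (it is the empty sum), so I read (iv) as referring to nonzero subsums.
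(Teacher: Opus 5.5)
Your formal implications are fine. That covers (iv)$\Rightarrow$(iii)$\Rightarrow$(ii), and you are right that $0\in F(x_n)$ forces (iv) to be read as a statement about nonzero subsums. Your construction for (i)$\Rightarrow$(iv), $y_n=s(1-t)t^{n-1}$ with countably many bad $t\in[\frac12,1)$ excluded, also works. The multi-interval case of (ii)$\Rightarrow$(i) is correct: the leftmost gap $(r_m,x_m)$ is determined by $E$ alone, and whether $E\supset[0,\varepsilon]$ is also intrinsic, so both representations fall into that case. (The paper only quotes this theorem, so there is no in-paper proof to compare with.)

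The genuine gap is the case where infinitely many $n$ satisfy $x_n>r_n$. This covers Cantor sets and also Cantorvals such as the Guthrie--Nymann one. Here there is no leftmost gap, and ``run the same argument on any one of them'' fails. Knowing that $(r_n,x_n)$ is a gap of $E$ whose right endpoint is a term of $(x_n)$ says nothing about $(y_n)$, because such a gap for one representation need not be of the form $(r'_j,y_j)$ for another. Concretely, in Example~\ref{dwarazynieostro} the set $E(2,1;\frac15)=E(1,1,1;\frac15)$ has the gap $(r_1,x_1)=(\frac{7}{20},\frac{2}{5})$, yet $\frac25$ is not a term of $(1,1,1;\frac15)$.

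Your proposed justification is also false as stated. Right endpoints of gaps near $0$ are generally not terms: for the ternary Cantor set, the gap $(\frac{7}{81},\frac{8}{81})$ has right endpoint $x_3+x_4$. What you actually show, that $E\cap(r_n,x_n]=\{x_n\}$, only proves that $(r_n,x_n)$ is a gap.

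The missing idea is a representation-independent choice of gap. Start from any gap $(a_0,b_0)$. Only finitely many gaps inside $[0,b_0]$ have length at least $b_0-a_0$, so take one of maximal length, leftmost among ties. It is then strictly longer than every gap to its left. Applying the Third Gap Lemma~\ref{thirdgaplemma} to both representations gives $b=x_k=y_j$ for some $k,j$, which contradicts disjointness. This handles every non-interval set uniformly, including the multi-interval case, and it is exactly the mechanism the paper uses in its discussion after the Third Gap Lemma.
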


By a gap in $E\subset\mathbb{R}$ we mean an open interval $(a,b)$ such that $a,b\in E$ and $(a,b)\cap E=\emptyset$. We use the following result proved in \cite{BFGSW}. 
\begin{lem}\label{thirdgaplemma}
(Third Gap Lemma) Suppose that $(a,b)$ is a gap in the range $E(x_n)$ such that for
any gap $\left( a_{1},b_{1}\right) $ with $b_{1}<a$ we have $b-a>b_{1}-a_{1}$
(in other words $\left( a,b\right) $ is the longest gap from the left). Then for some $k\in \mathbb{N}$  we have $b=x_{k}$  and $a=r_{k}$.
\end{lem}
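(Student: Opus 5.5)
We argue directly from the decomposition $E(x_n)=F_k(x_n)+E_k(x_n)$, choosing $k$ by a tail condition. Since $b\in E(x_n)$ and $b>0$ while $r_n\to 0$, the index
$$k=\min\{n : r_n<b\}$$
is well defined. By minimality, $r_{k-1}=x_k+r_k\ge b$ (with $r_0=\sum x_n$). Since $r_k\in E(x_n)$, $r_k<b$ and $(a,b)\cap E(x_n)=\emptyset$, we get $r_k\le a$.

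\textbf{Step 1: the interval $(r_k,x_k)$ misses $E(x_n)$.} Every element of $E(x_n)$ either lies in $E_k(x_n)\subset[0,r_k]$, or uses some term $x_n$ with $n\le k$. In the second case it is $\ge x_n\ge x_k$. Hence $E(x_n)\cap(r_k,x_k)=\emptyset$.

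\textbf{Step 2: if $x_k\ge b$, we are done.} In this case $E(x_n)\cap(r_k,b)=\emptyset$ and $r_k\le a$. Since $(a,b)$ is a gap, $a\in E(x_n)\cap[r_k,b)$, so $a=r_k$. Then $b$ is the smallest element of $E(x_n)$ above $r_k$. By Step 1 every element of $E(x_n)$ above $r_k$ is $\ge x_k$, and $x_k\in E(x_n)$, so $b=x_k$.

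\textbf{Step 3: the case $x_k<b$ is impossible.} This is the main obstacle. Since $x_k\in E(x_n)$ and $x_k<b$, we have $x_k\le a$. Combined with $r_k\le a$ and $x_k+r_k\ge b$, this gives $x_k\ge b-r_k\ge b-a$ and $r_k\ge b-x_k\ge b-a$.

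First, suppose $x_k>r_k$. Then $(r_k,x_k)$ is a gap lying to the left of $a$, so its length is $<b-a$. I would then use the translate $x_k+E_k(x_n)\subset E(x_n)$, which covers $[x_k,x_k+r_k]\ni b$ in the sense of sets. I plan to transfer the gap $(a,b)$ back by $x_k$: every point of $E(x_n)$ just below $b$ should come from $x_k+E_k(x_n)$. This would produce a gap $(a-x_k,b-x_k)$ in $E_k(x_n)$ that is also a gap of $E(x_n)$, of length $b-a$ and lying to the left of $a$. That contradicts the hypothesis that $(a,b)$ is strictly longer than every gap to its left.

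Second, suppose $x_k\le r_k$. Here I would replace $x_k$ by the tail and argue in the same way with the shift $r_k$, or with the shifts $F_k(x_n)$.

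The delicate point in both sub-cases is to verify that the shifted interval is genuinely a gap of $E(x_n)$ and not only of $E_k(x_n)$. For this I would exploit that elements of $E(x_n)$ using an index $n<k$ are $\ge x_{k-1}$. If that control fails, my fallback is to induct on $k$ by applying the statement to the tail sequence $(x_n)_{n>k-1}$.
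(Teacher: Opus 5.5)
Your Steps 1 and 2 are correct, and $k=\min\{n:r_n<b\}$ is a good choice. Whenever the lemma holds, this $k$ is automatically the largest index with $x_k=b$, so repeated terms cause no trouble. The gap is Step 3, which carries the whole content of the lemma.

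In the subcase $x_k>r_k$, the claim that $(a-x_k,b-x_k)$ is a gap of $E_k(x_n)$ is unjustified, because $b$ may use an index $<k$. For $(x_n)=(\frac52,2,1,\dots)$ with a tail of tiny sum and the gap ending at $b=\frac52$, you get $k=2$ and $b-x_k=\frac12\notin E_2(x_n)$. This subcase is easily repaired. The interval $(a,b)\subset[x_k,x_k+r_k]$ misses $x_k+E_k(x_n)$, so it lies in a gap of that set. Translating by $-x_k$ gives a gap of $E_k(x_n)$ of length $\ge b-a$ inside $[0,r_k]$. Since $r_k<x_k$, we have $E(x_n)\cap[0,r_k]=E_k(x_n)$, so this is a gap of $E(x_n)$ ending before $a$, which contradicts the hypothesis.

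In the subcase $x_k\le r_k$, however, you give no argument:
\begin{itemize}
\item The translated gap of $E_k(x_n)$ may now contain points of $x_k+E_k(x_n)\subset E(x_n)$, since $x_k\le r_k$.
\item A shift by $r_k$ is not available, because $r_k+E_k(x_n)\not\subset E(x_n)$ in general.
\item The fallback induction is circular. Relative to the tail $(x_n)_{n\ge k}$ your index becomes $1$, so everything reduces to the case $k=1$, $x_1\le r_1\le a<b\le r_0$. That is precisely this subcase (think of $(1,1,\dots)$ with a tiny tail and $b=2$). Moreover, $(a,b)$ need not be a gap of $E_{k-1}(x_n)$ at all, since $a$ and $b$ may use indices $<k$.
\end{itemize}

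The missing idea is to take a finite representation of $b$ and translate by everything except its last term.

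\emph{First step: $b$ is a term.} Choose $j$ with $r_j<b-a$. Writing $b=f+e$ with $f\in F_j(x_n)$ and $e\in E_j(x_n)$ forces $f=b$. So $b=s+x_p$, where $p$ is the largest index in a finite representation of $b$ and $s\in F_{p-1}(x_n)$. Then $s\le a$ and $s+E_{p-1}(x_n)\subset E(x_n)$, whence $E_p(x_n)\cap(a-s,x_p)=\emptyset$. Every subsum smaller than $x_p$ lies in $E_p(x_n)$. Hence, with $c=\max(E_p(x_n)\cap[0,a-s])$, the interval $(c,x_p)$ is a gap of $E(x_n)$ of length $\ge b-a$ with right end $x_p\le b$. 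The hypothesis, together with perfectness of $E(x_n)$ to exclude $x_p=a$, forces $x_p=b$. Thus the right end of a longest-from-the-left gap is always a term.

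\emph{Second step: $a=r_k$.} Let $k$ be the largest index with $x_k=b$. All subsums below $x_k$ lie in $E_k(x_n)$, so $a\in E_k(x_n)$ and $a\le r_k$. Suppose $r_k\ge b$. Then $E_k(x_n)$ has a gap $(a,d)$ with $d\ge b$. This gap is longest from the left in $E_k(x_n)$, because below $x_k$ the sets $E(x_n)$ and $E_k(x_n)$ coincide. Applying the first step to the tail $(x_n)_{n>k}$ gives $d=x_{k'}$ with $k'>k$, so $d\le x_{k+1}<b$, a contradiction. Hence $r_k<b$, so $r_k\le a$, and therefore $a=r_k$.

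This proves the lemma outright and, in your notation, rules out $x_k<b$.
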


Hence if we have interval-like achievement set $E(x_n)$ then we cannot recover the sequence $(x_n)$. We can not even get a one term of the sequence.  The case is different when $E(x_n)$ is a finite sum of closed intervals. 
By Theorem \ref{Kakeya} (iii) there exists $k$ such that the inequality $x_n\leq \sum_{i=n+1}^{\infty}x_i$ holds for all $n>k$ if and only if $E(x_n)$ is a finite union of closed intervals. More precisely $$E(x_n)=F_k(x_n)+E_k(x_n)=\{\sum_{i=1}^{k}\varepsilon_i x_i : (\varepsilon_i)_{i=1}^{k}\in\{0,1\}^k\}+E\big((x_n)_{n>k}\big),$$ where $E\big((x_n)_{n>k}\big)=[0,\sum_{n>k}x_n]$. Let $F_k(x_n)=\{0=f_0<f_1<\ldots <f_m=\sum_{i=1}^{k}x_i\}$. Note that if $f_{i}-f_{i-1}>r_k$ then $(f_{i-1}+r_k,f_i)$ is a gap in $E(x_n)$. Moreover if the gap is the longest from the left then $f_i$ is one of the first $k$-terms of the sequences $(x_n)$. In particular for any two representations $(x_n)$ and $(y_n)$ we get that $f_i$ is its common term.

Conversely, if $E(x_n)$ is not a multi-interval then it has infitely many gaps among which are infinitely many longest from the left. Thus $\{x_n : n\in\mathbb{N}\}\cap\{y_n : n\in\mathbb{N}\}=\emptyset$ is infinite. Hence we get 
\begin{thm}
The achievement set $E$ is a finite union of intervals if and only if there exist two representations $(x_n)$ and $(y_n)$ such that  the intersection $\{x_n : n\in\mathbb{N}\}\cap\{y_n : n\in\mathbb{N}\}=\emptyset$ is finite. 
\end{thm}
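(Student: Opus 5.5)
The statement is meant to say that $E$ is a finite union of intervals if and only if there are two representations $(x_n)$, $(y_n)$ of $E$ for which $\{x_n : n\in\mathbb{N}\}\cap\{y_n : n\in\mathbb{N}\}$ is finite. I would prove the two implications separately. The forward one is a construction built on Theorem \ref{intervalcharacterization}; the converse is a counting argument built on the Third Gap Lemma \ref{thirdgaplemma}. Throughout I use that $E(x_n)$ does not depend on the order of the terms, by absolute convergence, so any sequence can be rearranged into non-increasing order without changing its achievement set.

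\emph{Forward direction.} Let $E=E(x_n)$ be a finite union of intervals. By Theorem \ref{Kakeya} (iii) there is $k$ with $x_n\le r_n$ for all $n>k$. Then the tail $(x_n)_{n>k}$ has achievement set $[0,r_k]$, and $E=F_k(x_n)+[0,r_k]$. The set $[0,r_k]$ is an achievable interval. Applying Theorem \ref{intervalcharacterization} (iii) to it with the representation $(x_n)_{n>k}$ gives a sequence $(z_n)$ with $E(z_n)=[0,r_k]$ and no term in common with $(x_n)_{n>k}$. Let $(y_n)$ be the non-increasing rearrangement of the terms $x_1,\dots,x_k,z_1,z_2,\dots$. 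Then
$$E(y_n)=F_k(x_n)+E(z_n)=F_k(x_n)+[0,r_k]=E.$$
Every common term of $(x_n)$ and $(y_n)$ is either one of $x_1,\dots,x_k$, or a $z_j$ equal to some $x_i$. In the second case $i\le k$, because the $z_j$ avoid the tail $(x_n)_{n>k}$. Hence the intersection is contained in $\{x_1,\dots,x_k\}$ and is finite.

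\emph{Converse.} Suppose $E$ is not a finite union of intervals, and let $(x_n)$, $(y_n)$ be any two representations; I will show they share infinitely many terms. By Theorem \ref{Kakeya} (iii) there are infinitely many $n$ with $x_n>r_n$. For such $n$, $(r_n,x_n)$ is a gap in $E$. Indeed, any subsum below $x_n$ cannot use any of $x_1,\dots,x_n$, so it lies in $E_n(x_n)\subset[0,r_n]$. These gaps lie in $[0,x_n]$ and their lengths tend to $0$.

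Call a gap a \emph{record} if it is strictly longer than every gap lying to its left. For each such $n$, the gaps contained in $[0,x_n]$ have lengths accumulating only at $0$, so a maximal length is attained among them. The leftmost gap of maximal length is then a record, and it lies in $[0,x_n]$. Letting $n\to\infty$ along the infinitely many admissible indices produces infinitely many distinct record gaps $(a,b)$.

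By the Third Gap Lemma \ref{thirdgaplemma}, applied to each representation in turn, every record gap $(a,b)$ has $b=x_k$ for some $k$ and $b=y_j$ for some $j$. Distinct record gaps have distinct right endpoints. Hence $\{x_n\}\cap\{y_n\}$ is infinite, which proves the converse by contraposition.

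The main obstacle I expect is the record-gap step, namely showing that there really are infinitely many gaps that are longest from the left. This needs the lengths of the gaps near $0$ to tend to zero, so that the maxima are attained, together with the explicit gaps $(r_n,x_n)$ forcing the records to move towards $0$. Everything else is bookkeeping.
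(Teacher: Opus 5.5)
Your proof is correct and follows essentially the same route as the paper. The forward direction rests on the decomposition $E=F_k(x_n)+[0,r_k]$ from Theorem~\ref{Kakeya}(iii), and the converse combines infinitely many gaps that are longest from the left with the Third Gap Lemma~\ref{thirdgaplemma}. You also supply details the paper only sketches: the explicit replacement of the tail by a term-disjoint representation of $[0,r_k]$ via Theorem~\ref{intervalcharacterization}(iii), and the argument (take the leftmost gap of maximal length inside $[0,x_n]$, then let $x_n\to 0$) showing that there really are infinitely many such record gaps.
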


To sum up, one can construct two interval-filling sequences with the same sum and no common terms. Conversely, if the set of subsums is a fnite disjoint union of at least two closed intervals then any its representation need to have specific terms. Some of them are connected with the specific structure of the achievement set while the others can be  obtained by the general methods like the Third Gap Lemma. The case is different when the achievement set is a Cantor set.
 
\begin{exa}
Let us consider the ternary Cantor set $C=E(x_n)$, where  $x_n=\frac{2}{3^n}$. Observe that any of the numbers $x_{n}=\frac{2}{3^n}$ is the right end of the longest gaps  from the left of $C$. Indeed, $r_n=\frac{1}{3^n}$ for all $n$, so any gap $(r_n,x_n)$ is as long as  part of the achievement set $C\cap [0,r_n]$ remaining on its left. 

Suppose that  $E(y_n)=C$ for some sequence $(y_n)$. Then $\{x_n:n\in\mathbb{N}\}\subset \{y_n:n\in\mathbb{N}\}$. Observe that $\sum_{n=1}^{\infty} x_n=1$,  so $\sum_{n=1}^{\infty} y_n\geq 1$. But $\sum_{n=1}^{\infty} y_n=1$, so we cannot add any more terms to the sequence $(y_n)$. Hence $y_n=x_n$ for all $n$ which means that the ternary Cantor set is obtained in the unique way as achievement set of non-increasing sequence by the sequence $(x_n)$.
\end{exa}

The Authors in \cite{recover} formulated a general result. 
\begin{thm}\label{najdluzszeluki}
Assume that $(x_n)$ is fast convergent.
If for any $n$ the gap $(r_n,x_n)$ is the longest from the left and $E(y_n)=E(x_n)$ then $y_n=x_n$, that is the achievement set has a unique representation. 
\end{thm}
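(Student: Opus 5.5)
The plan is to generalize the argument given for the ternary Cantor set in the preceding example. There are two steps. First, every term $x_n$ must appear among the terms of any representation $(y_n)$. Second, the equality of total sums leaves no room for further terms.

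\emph{Step 1.} Let $(y_n)$ be non-increasing with $E(y_n)=E(x_n)=:E$, and denote its tails by $r_k^{y}=\sum_{i>k}y_i$. Since $(x_n)$ is fast convergent, $x_n>r_n$ for every $n$, so $(r_n,x_n)$ is indeed a gap of $E$. Indeed, $E\subset[0,r_n]\cup[x_n,\sum_i x_i]$: every subsum either omits $x_1,\dots,x_n$ or contains some $x_j\ge x_n$ with $j\le n$, and $r_n,x_n\in E$. By hypothesis this gap is the longest from the left. The Third Gap Lemma is a statement about the set $E=E(y_n)$ together with the sequence generating it. Applied to the representation $(y_n)$, it gives an index $k$ with $x_n=y_k$ and $r_n=r^{y}_k$. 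Hence $\{x_n:n\in\mathbb{N}\}\subset\{y_n:n\in\mathbb{N}\}$.

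We need more than set inclusion, because equal values could occur with multiplicity. Fast convergence forces $x_n>r_n\ge x_{n+1}$, so the $x_n$ are pairwise distinct and strictly decreasing. Distinct values $x_n$ therefore correspond to distinct indices $k_n$ of $(y_n)$. It follows that the multiset of the $x_n$ is a sub-multiset of the terms of $(y_n)$.

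\emph{Step 2.} Since $\max E=\sum_n x_n=\sum_n y_n$, we get
$$\sum_n y_n=\sum_n x_n+\sum_{k\notin\{k_n\}}y_k.$$
So the leftover terms sum to $0$. As all terms are positive, there are no leftover terms. Thus $(y_n)$ is a rearrangement of $(x_n)$. Both sequences are non-increasing, and $(x_n)$ is strictly decreasing, so $y_n=x_n$ for all $n$.

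The main obstacle is the careful use of the Third Gap Lemma with respect to the unknown sequence $(y_n)$. We must make sure the lemma applies to any non-increasing representation of the set, not just the one we started with, since it only refers to the set $E(x_n)$ and its generating sequence. We must also handle the injectivity $n\mapsto k_n$ correctly. If needed, injectivity can instead be obtained from the second conclusion $r_n=r^{y}_{k_n}$: the $r_n$ are strictly decreasing, and $k\mapsto r^y_k$ is non-increasing, so the indices $k_n$ are strictly increasing.
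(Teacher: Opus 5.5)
Your proof is correct and is essentially the paper's argument. The paper only cites this theorem, but the reasoning it gives for the ternary Cantor set just before it is the same as yours: apply the Third Gap Lemma to the unknown representation $(y_n)$ to get $\{x_n:n\in\mathbb{N}\}\subset\{y_n:n\in\mathbb{N}\}$, then use $\sum_n y_n=\max E=\sum_n x_n$ to exclude any further terms. Your explicit handling of multiplicities, where the strict decrease $x_n>r_n\ge x_{n+1}$ makes $n\mapsto k_n$ injective, fills in a point the paper leaves implicit.
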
 

Although the fast convergence of the sequence $(x_n)$ guarantees that the inequality $x_n>r_n$ holds for any $n$, which means that $(r_n,x_n)$ is a gap but we can not conclude that it satisfies the assumptions of Third Gap Lemma. Indeed let us consider the following Example.

\begin{exa}\label{szybkozbieganieluki}
Let $x_1=5$, $x_2=3$, $x_{n+2}=\frac{2}{3^n}$ for all $n$. Then $x_1>r_1=4$ and $x_2>r_2=1$, so the sequence $(x_n)$ is fast convergent. However $(r_1,x_1)=(4,5)$ is not longer than the gap $(r_2,x_2)=(1,3)$ on its left. 
\end{exa}

Hence we need some stronger condition than fast convergence to be sufficient for satisfying the assumptions of Theorem  \ref{najdluzszeluki}. It is clear that if $x_n\geq 2r_n$ then any gap $(r_n,x_n)$ is the longest from the left since it is even longer than the rest part of the achievement set on its left. Hence we immediately obtain the uniqueness of representation for geometric set of subsums.

\begin{exa}
Let $q\leq\frac{1}{3}$. The set $E(q^n)$ has unique representation by geometric sequence. Indeed, we have 
$$x_n=q^n\geq \frac{2q^{n+1}}{1-q}=2r_n \Leftrightarrow q\leq\frac{1}{3}.$$
\end{exa}

However it appears that we have a weaker sufficient condition \cite{recover}.

\begin{thm}\label{dwarazywiekszy}
Assume that $x_n>2x_{n+1}$ for every $n\in\mathbb{N}$. Then $(x_n)$ satisfies the assumptions of Theorem \ref{najdluzszeluki}.
\end{thm}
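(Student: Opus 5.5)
The plan is to check the two assumptions of Theorem \ref{najdluzszeluki} directly: that $(x_n)$ is fast convergent, and that every gap $(r_n,x_n)$ is strictly longer than every gap lying to its left.

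\emph{Fast convergence.} Iterating $x_n>2x_{n+1}$ gives $x_{n+k}<x_n/2^k$ for $k\ge 1$. Summing over $k$ yields
$$r_n=\sum_{k\ge1}x_{n+k}<\sum_{k\ge1}\frac{x_n}{2^k}=x_n.$$
The inequality stays strict after summation because already the first term satisfies $x_{n+1}<x_n/2$. Hence $x_n>r_n$ for every $n$, so $(x_n)$ is fast convergent and each $(r_n,x_n)$ is a genuine gap of $E(x_n)$.

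\emph{Locating the gaps to the left of $(r_n,x_n)$.} Any subsum that uses some $x_j$ with $j\le n$ is at least $x_n$. Therefore
$$E(x_n)\cap[0,x_n)=E_n(x_n)\subset[0,r_n],$$
so every gap to the left of $(r_n,x_n)$ is a gap of the tail set $E_n(x_n)$. Next I use the decomposition
$$E_m(x_n)=\{0,x_{m+1}\}+E_{m+1}(x_n).$$
Since $x_{m+1}>r_{m+1}$, the two pieces $E_{m+1}(x_n)\subset[0,r_{m+1}]$ and $x_{m+1}+E_{m+1}(x_n)\subset[x_{m+1},x_{m+1}+r_{m+1}]$ are separated by the gap $(r_{m+1},x_{m+1})$. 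So the gaps of $E_m(x_n)$ are exactly this gap together with the gaps of $E_{m+1}(x_n)$ and their translates by $x_{m+1}$. By induction, every gap of $E_n(x_n)$ has length $x_k-r_k$ for some $k>n$.

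\emph{Comparing lengths.} It now suffices to show that the gap lengths strictly decrease, i.e.
$$x_n-r_n>x_{n+1}-r_{n+1}\quad\text{for all } n.$$
Writing $r_n=x_{n+1}+r_{n+1}$, this inequality becomes
$$x_n-x_{n+1}-r_{n+1}>x_{n+1}-r_{n+1},$$
which is exactly $x_n>2x_{n+1}$. So $x_n-r_n>x_k-r_k$ for all $k>n$. Hence $(r_n,x_n)$ is strictly the longest gap from the left, and Theorem \ref{najdluzszeluki} applies.

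The step needing the most care is the induction describing all gaps of the tail set as translates of $(r_k,x_k)$ with $k>n$. It relies on fast convergence holding for every $m$, which is what makes the two pieces of each decomposition disjoint and separated by the gap $(r_{m+1},x_{m+1})$.
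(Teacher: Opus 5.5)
The paper gives no proof of this statement; it only quotes it from \cite{recover}. Your argument is correct once the induction step described in the second paragraph is repaired, so the comparison below is with the closest reasoning the paper does contain.

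That reasoning is in the proof of Theorem \ref{niemainnychrozwiniec}. There the inequality $\alpha_1>(N_2+1)\alpha_2$ is rewritten as a comparison of consecutive gap lengths, just like your $x_n-r_n>x_{n+1}-r_{n+1}\iff x_n>2x_{n+1}$. The Third Gap Lemma (Lemma \ref{thirdgaplemma}) then says that any longest-from-the-left gap has the form $(r_k,x_k)$. Along that route you never need the full gap structure. Suppose some gap in $[0,r_n]$ had length at least $x_n-r_n$. Take the leftmost gap of maximal length in $[0,r_n]$; it exists because only finitely many gaps exceed a given length. That gap is longest from the left, so it equals $(r_k,x_k)$ with $x_k\le r_n<x_n$, hence $k>n$. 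This contradicts the strict decrease of $x_k-r_k$. Your route avoids the Third Gap Lemma and proves more, namely that every gap of $E(x_n)$ is a translate of some $(r_k,x_k)$, but it costs one technical point.

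That point is your ``by induction'' step. It passes from $E_{m+1}(x_n)$ to $E_m(x_n)$ with no base case, so as written it is an infinite regress, not an induction. Fast convergence at every level does not make it terminate; what you need is $r_M\to 0$. Fix a gap $(a,b)$ of $E_n(x_n)$ and choose $M$ with $r_M<b-a$. A genuine finite induction over $m=n,\dots,M-1$, using your decomposition $E_m(x_n)=\{0,x_{m+1}\}+E_{m+1}(x_n)$, shows that every gap of $E_n(x_n)$ is one of two kinds:
\begin{itemize}
\item a translate of some $(r_k,x_k)$ with $n<k\le M$;
\item a translate of a gap of $E_M(x_n)\subset[0,r_M]$.
\end{itemize}
Gaps of the second kind have length at most $r_M<b-a$, so $(a,b)$ must be of the first kind. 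With this repair, the rest of your proof goes through unchanged.
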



Hence we get the stronger result about geometric achievement sets. 
\begin{exa}
Let $q<\frac{1}{2}$. The set $E(q^n)$ has unique representation by geometric sequence. 
\end{exa}

Theorem \ref{dwarazywiekszy} can be also used to obtain uniquely represented family of Cantor sets with positive Lebesque measure.
\begin{exa}
Let $q\in (0,\frac{1}{2})$ and $x_n=\frac{1}{2^n}+q^n$ for $n\in\mathbb{N}$. Then a sequence $(x_n)$ satisfies a condition given in Theorem \ref{dwarazywiekszy}, so the set $E(x_n)$ is the achievement set of the only one sequence. Moreover the Lebesgue measure of the set $E(x_n)$ can be calculated by the formula given in \cite{BFPW}, namely $$\lambda\big(E(x_n)\big)=\lim_{n\to\infty}2^n r_n=\lim_{n\to\infty}2^n(\frac{1}{2^n}+\frac{q^{n+1}}{1-q})=1.$$ 
\end{exa}

The sufficient condition $x_n>2x_{n+1}$ for $n\in\mathbb{N}$ given in Theorem \ref{dwarazywiekszy} is optimal for having unique representation of the set of subsums. Indeed if we assume even a slightly weaker inequality that $x_n\geq 2x_{n+1}$  for all $n\in\mathbb{N}$ then we may obtain various representations of the achievement set $E(x_n)$. The obvious counterexample is the interval filling sequence $x_n=\frac{1}{2^n}$ which leads to the binary expansions of the unite interval. However we can also find it among the fast convergent representations of Cantor sets.
\begin{exa}\label{dwarazynieostro}
Let us consider the multigeometric sequence defined as $x_{2n-1}=\frac{2}{5^n}, x_{2n}=\frac{1}{5^n}$ for each $n\in\mathbb{N}$ or shorter $(x_n)=(2,1;\frac{1}{5})$, see \cite{BFS}. Observe that $x_n>r_n$ and  $x_n\geq 2x_{n+1}$ for each $n\in\mathbb{N}$, so the series $\sum_{n=1}^{\infty} x_n$ is quickly convergent, but the condition $x_n>2x_{n+1}$ is satisfied only for even $n$'s. Define $y_{3n-2}=y_{3n-1}=y_{3n}=\frac{1}{5^n}$ or $(y_n)=(1,1,1;\frac{1}{5})$. Then we have $E(x_n)=E(y_n)$. We can construct even continuum many different sequences with the given set of subsums. Indeed let us define $(z_n^{(n_i)})$ in the following way: 
\\In the $k$-th step of construction we either define two elements $z_{n_k}^{(n_i)}=\frac{2}{5^k}$,  $z_{n_k+1}^{(n_i)}=\frac{1}{5^k}$ or three $z_{n_k}^{(n_i)}=z_{n_k+1}^{(n_i)}=z_{n_k+2}^{(n_i)}=\frac{1}{5^k}$, where $n_1=1$ and $n_{k+1}-n_k$ is equal to the number of defined elements. For each sequence $(z_n^{(n_i)})$ we obtain that $E(z_n^{(n_i)})=E(x_n)$ and clearly there are continuum many of ways of taking the sequences $(n_i)$, which lead to a different sequences.
\end{exa}
In Example \ref{dwarazynieostro} the gaps $(r_{2n},x_{2n})$ and $(r_{2n-1},x_{2n-1})$ have equal lengths. Indeed, the following equalities hold
$$x_{2n-1}-r_{2n-1}=\frac{2}{5^n}-x_{2n}-r_{2n}=\frac{1}{5^n}-r_{2n}=x_{2n}-r_{2n},$$
so the assumptions of Theorem \ref{najdluzszeluki} are not satisfied.

If we consider continuum many representations of the set $E(x_n)$ in Example \ref{dwarazynieostro} we can see that only the original sequence $(x_n)$ is fast convergent. Now we prove that if the Cantor set $E(x_n)$ has a fast convergent representation then it has to be unique. 

\begin{thm}
If the equality $E(x_n)=E(y_n)$ holds and both $(x_n)$ and $(y_n)$ are fast convergent then $x_n=y_n$ for all $n$. 
\begin{proof}
Since $r_1<x_1$ the following inequalities hold
$$2r_1<x_1+r_1=\sum_{n=1}^{\infty}x_n<2x_1,$$
which means that the point of reflection $\frac{1}{2}\sum_{n=1}^{\infty}x_n$ of  the set $E(x_n)$ belongs to a gap $(r_1,x_1)$. Note that for any alternative representation $(y_n)$ which is fast convergent the anaologous inequalities hold 
$$\sum_{n=2}^{\infty}y_n<\frac{1}{2}\sum_{n=1}^{\infty}y_n=\frac{1}{2}\sum_{n=1}^{\infty}x_n<y_1.$$ 
Hence $\frac{1}{2}\sum_{n=1}^{\infty}x_n$ belong to a gap $(\sum_{n=2}^{\infty}y_n,y_1)$. Thus $y_1=x_1$. In a similar way we prove that $y_k=x_k$ by using the achievement set $E_k(x_n)=E\big((x_n)_{n>k}\big)$  and the fact that the tail is fast convergent as well as the given sequence. 
\end{proof}
\end{thm}

\section{Semi-fast convergent sequences}


Now we consider a more general notion that fast convergence, which was introduced in \cite{BFPW2}. 
The sequence $(x_n)$ is semi-fast convergent if $x_n>\sum_{k: x_k<x_n}x_{k}$ holds for every $n$. As in the case of the fast convergent sequences achievement sets here are also Cantors. The alternative desciption for a semi-fast convergent sequence $(x_n)$ is as follows: there exists two uniquely determined sequences $(\alpha_k)$ of positive numbers decreasing to $0$ and $(N_k)$ of natural numbers such that $x_i=\alpha_k$ for $\sum_{j=0}^{k-1}N_j<i\leq \sum_{j=0}^{k-1}N_j$. The numbers $\alpha_k$ are the values of the terms of the sequence $(x_n)$ and $N_k$ is the multiplicity of the value $\alpha_k$. It means that 
$$(x_n)=(\underbrace{\alpha_1,\ldots,\alpha_1}_{N_1-\text{times}},\underbrace{\alpha_2,\ldots,\alpha_2}_{N_2-\text{times}},\ldots),$$
where $\alpha_k>\sum_{n>k}N_n\alpha_n$ for each $k$. We wil also denote semi-fast convergent sequence $(x_n)$ as $(\alpha_k,N_k)$.

The Authors \cite{BFPW2} proved that $E(\alpha_k,N_k)$ is a Central Cantor set (or equivalently has fast convergent representation) if and only if for any $k$ the following equality holds for the numbers of repeating terms $N_k=2^{m_k}-1$ for some natural number $m_k$. 

It is clear that if $m_k>1$, so $N_k\geq 3$ then the representation is nonunique. For instance, if $N_k=3$ then the terms $\alpha_k,\alpha_k,\alpha_k$ can be replaced with $2\alpha_k,\alpha_k$ and the whole achievement set $E(\alpha_k,N_k)$ does not change. In general we may consider the notion of complete sequence introduced in \cite{Brown}.

We say that a non-decreasing sequence of natural numbers $(z_n)$ is complete if $E(z_n)=\mathbb{N}\cup\{0\}$. The set of subsums for a sequence which does not have any subsequence tending to zero is reduced only to finite sums.  We can generalize this notion to finite sequence $(z_n)_{n=1}^{k}$ in the way that it is complete if $E\big((z_n)_{n=1}^{k}\big)=\{0,1,2,\ldots,\sum_{n=1}^{k}z_n\}$. 

For instance, the set $\{0,1,2,3,4,5,6\}$ is generated by the following five different complete sequences:
$$(1,1,1,1,1,1); \ \ (1,1,1,1,2); \ \ (1,1,2,2); \ \ (1,1,1,3); \ \ (1,2,3). $$

The alterative representation of $E(\alpha_k,N_k)$ can be constructed by making the following replacements on the segments of equal terms: \\
For a given $k$ we have $N_k$ repetitions of element $\alpha_k$. Take any non-constant complete sequence $(z_i)_{i=1}^{m}$, which generate the set $\{0,1,2,\ldots,N_k\}$ and replace the whole constant sequence of $\alpha_k$'s by $\beta_i=z_i\alpha_k$ for all $i\in\{1,\ldots,m\}$.

Of course the mentioned replacements can be done in any subset of $\mathbb{N}$ of indices $k$'s. Hence $E(\alpha_k,N_k)$ has continuum many representations if $N_k\geq 3$ for infinitely many $k$'s.

Now we would like to ask if there are any other representations than that in which we change the whole segment of constant elements by using some complete sequence.  The negative answer will appear in Theorem \ref{niemainnychrozwiniec}. 

Let $(X,d)$ be a metric space and $A\subset X$. We define the center of distances of set $A$ by 
$$C(A)=\{\alpha\geq 0: \forall_{x\in A}\exists_{y\in A} \ d(x,y)=\alpha\}.$$

We consider $X=\mathbb{R}$ equipped with standard topology generated by $d(x,y)=\vert y-x\vert$. 
In the paper \cite{BPW} the Authors proved the basic but very useful property of the center of distances of the set of subsums by giving the following two inclusions.
\begin{lem}\label{LematOCentrumDystansow}
The following inclusions hold $$\{x_n:n\in\mathbb{N}\}\subset C\big(E(x_n)\big)\subset E(x_n).$$
\end{lem}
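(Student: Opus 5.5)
We must prove Lemma \ref{LematOCentrumDystansow}: $\{x_n:n\in\mathbb{N}\}\subset C\big(E(x_n)\big)\subset E(x_n)$. The plan is to handle the two inclusions separately. Both follow directly from the structure of $E(x_n)$ as the set of values $\sum_n \varepsilon_n x_n$ over $(\varepsilon_n)\in\{0,1\}^{\mathbb{N}}$, together with the fact that $0\in E(x_n)$.

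For the first inclusion, fix $n$ and an arbitrary point $x\in E(x_n)$, say $x=\sum_{i=1}^\infty \varepsilon_i x_i$. We need some $y\in E(x_n)$ with $|y-x|=x_n$, and the idea is to flip the $n$-th coordinate. Let $\varepsilon'$ agree with $\varepsilon$ except that $\varepsilon'_n=1-\varepsilon_n$, and put $y=\sum_i\varepsilon'_i x_i$. Then $y\in E(x_n)$, and $y-x=\pm x_n$ according to whether $\varepsilon_n=0$ or $1$, so $d(x,y)=x_n$. Since $x$ was arbitrary, $x_n\in C\big(E(x_n)\big)$. Series convergence causes no trouble: all series involved are absolutely convergent because $(x_n)$ is summable and nonnegative, so changing a single term changes the sum by exactly that term.

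For the second inclusion, take $\alpha\in C\big(E(x_n)\big)$ and apply the defining property to the point $x=0\in E(x_n)$, which is the empty subsum. This gives some $y\in E(x_n)$ with $|y-0|=\alpha$. Every element of $E(x_n)$ is nonnegative because the terms are positive, so $y\geq 0$ and hence $y=\alpha$. Therefore $\alpha\in E(x_n)$.

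There is no real obstacle in either step. The only points that need explicit mention are that $0$ belongs to $E(x_n)$ and that $E(x_n)\subset[0,\infty)$; these are exactly what turn the distance condition at the point $0$ into membership of $\alpha$ in $E(x_n)$.
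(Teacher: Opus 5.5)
Your proof is correct. Flipping the $n$-th coefficient shows that $x_n$ is attained as a distance from every point of $E(x_n)$. Testing the center-of-distances condition at $0\in E(x_n)$, together with $E(x_n)\subset[0,\infty)$, gives the second inclusion.

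The paper does not prove this lemma itself; it quotes it from \cite{BPW}. Your argument is the standard one behind that result. The nonnegativity you invoke is not an extra assumption: a non-increasing summable sequence cannot have negative terms.
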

In \cite{BBFP} the Authors calculated the center of distances of Central Cantor set represented by semi-fast convergent sequence.

\begin{thm}
Let $E(\alpha_i,N_i)$ be a central Cantor set given by semi-fast convergent sequences satisfying $\alpha_i>(N_{i+1}+1)\alpha_{i+1}$ for every $i$. Then $$C\big(E(\alpha_i,N_i)\big)=\{0\}\cup\{k\alpha_i : i\in\mathbb{N}, k\in\{1,2,\ldots,\frac{N_i+1}{2}\}\}.$$
\end{thm}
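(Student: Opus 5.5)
The plan is to prove the two inclusions separately. Throughout I work with the coefficient description of the set: every $x\in E=E(\alpha_i,N_i)$ can be written as $x=\sum_i c_i\alpha_i$ with $c_i\in\{0,1,\ldots,N_i\}$. Put $R_k=\sum_{i>k}N_i\alpha_i$. Semi-fast convergence gives $\alpha_k>R_k$, so the coefficient vector $(c_i)$ of $x$ is unique. Moreover $E$ is ordered lexicographically by its coefficient vectors: if $x,y\in E$ first differ at index $j$, with $c_j(y)>c_j(x)$, then $y-x\geq \alpha_j-R_j>0$. Since $E(\alpha_i,N_i)$ is a central Cantor set, the $N_i$ are of the form $2^{m_i}-1$, hence odd, so $\frac{N_i+1}{2}$ is an integer.

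\emph{Inclusion $\supseteq$.} Clearly $0\in C(E)$. Fix $i$ and $k\leq\frac{N_i+1}{2}$, and take $x\in E$ with coefficient $c_i$ at index $i$. If $c_i+k\leq N_i$, then $y=x+k\alpha_i$ is obtained by changing only the $i$-th coefficient, so $y\in E$. Otherwise $c_i\geq N_i-k+1\geq k$, because $2k\leq N_i+1$. Then $y=x-k\alpha_i\in E$, again by changing only the $i$-th coefficient. In both cases $|x-y|=k\alpha_i$, which gives this inclusion.

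\emph{Inclusion $\subseteq$.} Let $\alpha\in C(E)$ with $\alpha>0$. By Lemma \ref{LematOCentrumDystansow}, $\alpha\in E$, so $\alpha=\sum_j c_j\alpha_j$. Let $i$ be the least index with $c_i\neq0$, and write $\alpha=c_i\alpha_i+t$ with $t=\sum_{j>i}c_j\alpha_j\in[0,R_i]$. I must show $t=0$ and $c_i\leq\frac{N_i+1}{2}$. In each case I exhibit a test point $x\in E$ with $x\pm\alpha\notin E$.

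First I need a ``no carry'' lemma. If $x\in E$ has all coefficients with index $<i$ equal to $0$, and $x\pm\alpha\in E$, then the coefficients of $x\pm\alpha$ below $i$ are also $0$. The reason is that $x+\alpha\leq 2(N_i\alpha_i+R_i)$. Iterating $\alpha_j>(N_{j+1}+1)\alpha_{j+1}$ should give $2R_{i-1}<\alpha_{i-1}-R_{i-1}$, or at least $x+\alpha$ lies strictly inside the gap $(R_{i-1},\alpha_{i-1})$ whenever it exceeds $R_{i-1}$. This is where the strengthened hypothesis $\alpha_i>(N_{i+1}+1)\alpha_{i+1}$, stronger than mere semi-fast convergence, enters.

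Granting the lemma, consider first the case $c_i>\frac{N_i+1}{2}$. Take $x=(N_i+1-c_i)\alpha_i$. Then $x-\alpha<0$, since $N_i+1-c_i<c_i$. Also $x+\alpha=(N_i+1)\alpha_i+t$ exceeds $N_i\alpha_i+R_i$, the largest element of $E$ whose coefficients below $i$ vanish. By the lemma, $x+\alpha\notin E$. So $c_i\leq\frac{N_i+1}{2}$.

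Next suppose $t>0$. Let $l>i$ be the least index with $c_l\neq0$. I would take $x$ with $i$-th coefficient $N_i$, so that subtraction is forced, and with tail coefficients chosen so that $x-\alpha$ lands in a gap of $E$. For instance, put $0$ at index $l$ and $0$ everywhere else. Then $x-\alpha=(N_i-c_i)\alpha_i-t$, which lies strictly between $(N_i-c_i-1)\alpha_i+R_i$ and $(N_i-c_i)\alpha_i$, that is, in a gap $(\ldots+R_i,\ \ldots+\alpha_i)$. Meanwhile $x+\alpha$ overshoots the $i$-block, because $c_i\geq1$.

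I expect the main obstacle to be these gap and no-carry estimates. One must check that the chosen $x\pm\alpha$ fall strictly inside gaps and cannot be rerepresented using a neighbouring index. The quantitative inequality $\alpha_i>(N_{i+1}+1)\alpha_{i+1}$ has to be propagated carefully through the tails, and the corner cases $c_i=N_i$ and $i=1$ must be handled separately.
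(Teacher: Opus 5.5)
The paper does not prove this statement; it quotes it from the literature. So your argument has to stand on its own. Your inclusion $\supseteq$ is correct.

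The inclusion $\subseteq$ has a genuine gap: the ``no carry'' lemma is false, and so are the gap-location claims that rest on it. The hypothesis $\alpha_j>(N_{j+1}+1)\alpha_{j+1}$ says exactly that the gap lengths $\alpha_j-R_j$ strictly decrease in $j$. It does not give $2R_{i-1}<\alpha_{i-1}$, let alone $2R_{i-1}<\alpha_{i-1}-R_{i-1}$. For the ternary Cantor set, $\alpha_j=2/3^j$, one has $2R_{i-1}=\alpha_{i-1}$.

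Here is a concrete failure. Let $N_j=3$ and $\alpha_j=q^j$ with $q=\sqrt{5}-2$. Then $4q+q^2=1$, i.e.\ $\alpha_{j-1}=4\alpha_j+\alpha_{j+1}$, and since $q<\frac14$ all hypotheses hold.
\begin{itemize}
\item Take $\alpha=3\alpha_2+\alpha_3$, so $i=2$ and $c_2=3>\frac{N_2+1}{2}$. Your test point $x=\alpha_2$ gives $x+\alpha=4\alpha_2+\alpha_3=\alpha_1\in E$.
\item The choice $x=N_2\alpha_2$ from your $t>0$ case gives $x+\alpha=\alpha_1+2\alpha_2\in E$. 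So carries really occur.
\item Your claim that $(N_i-c_i)\alpha_i-t$ lies in the gap $\bigl((N_i-c_i-1)\alpha_i+R_i,(N_i-c_i)\alpha_i\bigr)$ needs $t<\alpha_i-R_i$, but $t$ can be as large as $R_i$. For $\alpha=\alpha_2+3\alpha_3$ and $x=3\alpha_2$ one gets $x-\alpha=2\alpha_2-3\alpha_3=\alpha_2+\alpha_3+\alpha_4\in E$.
\end{itemize}

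The theorem itself survives these examples. For instance, $x=3\alpha_3$ gives $x-\alpha<0$ and $x+\alpha=3\alpha_2+4\alpha_3\in(R_1,\alpha_1)$, which is a gap, so $3\alpha_2+\alpha_3\notin C(E)$.

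This shows what is missing. The witness $x$ has to be built from the tail digits $c_l,c_{l+1},\ldots$ of $\alpha$, not only from $c_i$. You also need an actual argument, using the strict decrease of the gap lengths, that $x\pm\alpha$ can neither carry into indices below $i$ nor be re-represented inside a neighbouring block. You identified this as the main obstacle, but the resolution you sketch is false. As written, the proposal establishes only the inclusion $\supseteq$.
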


Hence by Lemma \ref{LematOCentrumDystansow} we know that in every representation  $(y_n)$ of $E(\alpha_i,N_i)$ we may use only the multiplies of the terms $\alpha_i$. Moreover observe that we have the following inequalities  $$\alpha_i>(N_{i+1}+1)\alpha_{i+1}>N_{i+1}\alpha_{i+1}+(N_{i+2}+1)\alpha_{i+2}>\ldots>\sum_{k=i+1}N_k\alpha_k.$$
Thus the inequality is a stronger condition than being semi-fast convergent.   
The reverse implication does not hold even for the fast convergent sequence as shows the following example.

\begin{exa}
Let $x_1=\alpha$ be any number in the interval $(1,\frac{4}{3})$  and $x_{n+1}=\frac{2}{3^n}$ for each $n$. Clearly $E(x_n)=C\cup (\alpha+C)$, where $C$ is the ternary Cantor set. The sequence is fast convergent but $x_1\leq 2x_2=(N_2+1)x_2$.

\end{exa}

The inequality $\alpha_i>N_{i+1}\alpha_{i+1}$ is important in considerations of semi-fast convergent sequences. 
Otherwise the two consequitive segments may coincide with each other \cite{BBFP}. 
A semi-fast convergent sequence $(\alpha_k,N_k)$  is irreductible if $\alpha_k\neq (N_{k+1}+1)\alpha_{k+1}$ for all $k$. 

If the sequence is reductible that is $\alpha_i =(N_{i+1}+1)\alpha_{i+1}$ for some $i$ then we can replace two segments $(\alpha_i,N_i)$ and $(\alpha_{i+1},N_{i+1})$ by the one segment $(\alpha_{i+1},N_i(N_{i+1}+1)+N_{i+1})$ to obtain another representation of the achievement set $E(\alpha_k,N_k)$. To omit that problems we study special case of irreductible sequences. We will prove the following. 

\begin{thm}\label{niemainnychrozwiniec}
Let $(\alpha_i,N_i)$ be a semi-fast convergent sequence satisfying $\alpha_i>(N_{i+1}+1)\alpha_{i+1}$. If $E(y_n)=E(\alpha_i,N_i)$ then for any $i\in\mathbb{N}$ the multiples of $\alpha_i$ in the sequence $(y_n)$ has a sum $N_i\alpha_i$ and form a complete sequence after scaled by $\alpha_i$.
\begin{proof}
Note that 
$$\alpha_1>(N_{2}+1)\alpha_{2},$$ 
which means that 
$$\alpha_1-\sum_{i=2}^{\infty}N_i\alpha_i>\alpha_2-\sum_{i=3}^{\infty}N_i\alpha_i.$$
By the Third Gap Lemma we know that any gap which is the longest from the left has a length $\alpha_p-\sum_{i=p+1}N_i\alpha_i$ for some $p$.
Since $(\sum_{i=2}^{\infty}N_i\alpha_i,\alpha_1)$ is the longest gap from the left we obtain that $\alpha_1\in (y_n)$. 
Let $m:=\max\{n: y_n=\alpha_1\}$. Then $y_m=\alpha_1$. Since $(\sum_{n=m+1}^{\infty}y_n,y_m)$ is the longest gap from the left in $E(y_n)=E(\alpha_i,N_i)$ we obtain that $\sum_{n=m+1}^{\infty}y_n=\sum_{k=2}^{\infty}N_k\alpha_k$. Thus $E_m(y_n)=E\big((\alpha_i,N_i)_{i\geq 2}\big)$. 

We will show that $F_m(y_n)=\{0,\alpha_1,2\alpha_1,\ldots,N_1\alpha_1\}$. 
\\'$\supset$'. Clearly $0\in F_m(y_n)$ and $\alpha_1\in F_m(y_n)$. Suppose that $k\alpha_1\notin F_m(y_n)$ for some $k\in\{2,\ldots,N_1\}$.  But $k\alpha_1\in E(y_n)$, so the equality $k\alpha_1=f+g$ holds for some $f\in F_m(y_n)$ and $g\in E_m(y_n)$. 
Since $E_m(y_n)\subset [0,\sum_{i=2}^{\infty}N_i\alpha_i]$ we obtain that $f\in (k\alpha_1-\sum_{i=2}^{\infty}N_i\alpha_i,k\alpha_1)$. But then the set $f+E_m(y_n)$, which is $\alpha_2- \sum_{i=3}^{\infty}N_i\alpha_i$-net as well as $E_m(y_n)$ cuts the gap o maximal length $\big((k-1)\alpha_1+\sum_{i=2}^{\infty}N_i\alpha_i,k\alpha_1\big)$, which yields a contradiction. 
\\'$\subset$'. Now we prove the reverse inclusion. Suppose that it does not hold  and take $f\in F_m(y_n)\setminus \{0,\alpha_1,\ldots,N_1\alpha_1\}$. Clearly $f\leq N_1\alpha_1$, because otherwise we immediately obtain a contradiction by the inequality
$$f+\sum_{i=2}^{\infty}N_i\alpha_i>\sum_{i=1}^{\infty}N_i\alpha_i.$$
Then $f+E_m(y_n)$ has a non-empty intersection with the gap of maximal length $(\lfloor\frac{f}{\alpha_1}\rfloor\alpha_1+\sum_{i=2}^{\infty}N_i\alpha_i,\lceil\frac{f}{\alpha_1}\rceil \alpha_1)$, a contradiction.
Hence the equality $$F_m(y_n)=\{0,\alpha_1,2\alpha_1,\ldots,N_1\alpha_1\}$$
holds, which means that $(y_n)_{n=1}^{m}$ is a complete sequence of multiples of $\alpha_1$. The proof for the next segments of elements $\alpha_n$'s is analogous.
\end{proof}
\end{thm}

By Theorem \ref{niemainnychrozwiniec} we know that the problem of  calculating the numbers of  semi-fast convergent representations reduces to the counting the number of representations of specific complete finite sequences. 
If a non-decreasing finite sequence $(z_n)_{n=1}^{m}$ is complete, that is $$E\big((z_n)_{n=1}^{m}\big)=\{0,1,2,\ldots,\sum_{n=1}^{m}z_n\}$$ then by Brown characterization \cite{Brown} the inequality $z_p\leq\sum_{n=1}^{p-1}z_n+1$ holds for all $p\leq m$. Conversely, if we treat the reverse sequence $(z_m,z_{m-1},\ldots,z_1)$ after possible scalling as some segment of multiples of some term in a semi-fast convergent sequence then if $z_p\neq z_{p-1}$ the value $z_{p}$ need to be greater than $\sum_{n=1}^{p-1}z_n$ and the part connected with the scalled tail. However $z_1=1$ is greater than the tail, so if we assume $z_p\geq\sum_{n=1}^{p-1}z_n+1$ then the semi-fast convergence holds. Hence if $z_p$ is the last in the segment of constant elements then the equality   $z_p=\sum_{n=1}^{p-1}z_n+1$ holds. 

Let us analyze the case in examples. We consider the complete finite sequences, which generate the given sets. We reverse them as they are placed in the semi-fast convergent sequence. It appears that not every reversal can be used, so we divide the family of complete sequences into two subfamilies: that which form a segment of multiples of some values in  a semi-fast convergent sequence and that which are not. For larger sets $\{1,2,\ldots,n\}$ for $n\geq 8$ we are interested only on the first subfamily, so we omit the second (also because there are too many of them).

\begin{center}
\begin{tabular}{|c||c|c|}
\hline
\hline
Set  & Semi-fast generator& Others \\
\hline
\hline
$\{0,1,2,3\}$ & $(1,1,1)$, $(2,1)$ & \\
\hline
$\{0,1,2,3,4\}$ & $(1,1,1,1)$ & $(2,1,1)$ \\
\hline
$\{0,1,2,3,4,5\}$ & $(1,\ldots,1)$, $(2,2,1)$, $(3,1,1)$ & $(2,1,1,1)$ \\
\hline
$\{0,1,2,3,4,5,6\}$ & $(1,\ldots,1)$ & $(2,1,1,1,1)$, $(2,2,1,1)$, \\
 &  & $(3,2,1)$, $(3,1,1,1)$ \\
\hline
$\{0,1,2,3,4,5,6,7\}$ & $(1,\ldots,1)$, $(2,2,2,1)$, &  $(2,1,\ldots,1)$,  $(2,2,1,1,1)$,  \\
 & $(4,2,1)$, $(4,1,1,1)$ &  $(3,2,1,1)$,  $(3,1,1,1,1)$, \\
\hline
$\{0,1,2,3,4,5,6,7,8\}$ & $(1,\ldots,1)$, $(3,3,1,1)$ &  \ldots  \\
\hline
$\{0,1,2,\ldots,9\}$ & $(1,\ldots,1)$, $(2,\ldots,2,1)$, &  \ldots  \\
 & $(5,1,\ldots,1)$, &    \\
\hline
$\{0,1,2,\ldots,10\}$ & $(1,\ldots,1)$ &  \ldots  \\
\hline
$\{0,1,2,\ldots,11\}$ & $(1,\ldots,1)$, $(2,\ldots,2,1)$ &  \ldots  \\
 & $(3,3,3,1,1)$, $(4,4,1,1,1)$ &    \\
  & $(6,1,\ldots,1)$, $(6,2,2,1)$, &    \\
    & $(6,3,1,1)$ &    \\
\hline
\end{tabular}
\end{center}

As we can see some sets $\{0,1,2,\ldots,n\}$ has unique constant semi-fast representation, for intance we can take $n\in\{4,6,10\}$, while for $n=11$ we have seven different representations. Let us denote by $R(n)$ the number of semi-fast representations for the set $\{0,1,2,\ldots,n\}$ for a given $n$. That is $R(4)=R(6)=R(10)=1$ and $R(11)=7$. Clearly the function $R(n)$ is far from being monotonous and our purpose is to obtain the formula for the values of the function $R$.  

We will show that $R(2^k-1)=2^{k-1}$ for any $k$, which will be useful for the most interesting case of Central Cantor sets. We have already seen that for $k\in\{1,2,3\}$, because $R(1)=1$, $R(3)=2$ and $R(7)=4$.   
First we show that if $(z_n)_{n=1}^{m}$ is a semi-fast, complete representation of $\{0,1,2,\ldots,2^{k}-1\}$ then the values of its terms are only among the powers of two. 

\begin{lem}\label{poweroftwo}
Let $k\in\mathbb{N}$. If  $(z_n)_{n=1}^{m}$ is a complete, semi-fast representation of the set $\{0,1,2,\ldots,2^{k}-1\}$ then for any $i\in\{1,\ldots,m\}$ there exists $0\leq p<m$ such that $z_i=2^p$.
\begin{proof}
Let $r$ be the number of repetitions of the maximal term in $(z_n)_{n=1}^{m}$, that is $z_m=z_{m-1}=\ldots=z_{m-r+1}> z_{m-r}$.
Then $z_m-1=\sum_{n=1}^{m-r}z_n$ and thus we obtain 
$$2^k-1=\sum_{n=1}^{m}z_n= z_m-1 + rz_m=(r+1)z_m-1,$$
so $(r+1)z_m=2^k$. Hence both $z_m$ and $(r+1)$ are the powers of $two$.  
We have already proved the assertion for all $i\geq m-r+1$. To finish the proof it is enough to note that the subsequence of initial terms  $(z_n)_{n=1}^{m-r}$ is complete and semi-fast convergent with a sum $z_m-1$. Since $z_m$ is a power of two the proof is inductive for $i< m-r$. 
\end{proof} 
\end{lem} 

\begin{cor}\label{numberofrepetition}
Note that Lemma \ref{poweroftwo} also determines the number of repetition of the chosen maximal element. If $z_m=2^p$ then $r=2^{k-p}-1$. 
\end{cor}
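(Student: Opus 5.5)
The corollary follows from the computation already made in the proof of Lemma \ref{poweroftwo}, so I would simply re-read that argument. Let $(z_n)_{n=1}^{m}$ be a complete, semi-fast representation of $\{0,1,\ldots,2^k-1\}$. Let $r$ be the number of repetitions of the maximal term $z_m$, so that $z_m=\ldots=z_{m-r+1}>z_{m-r}$.

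In the discussion preceding the table we showed that when $z_p$ is the last element of a constant segment (in the reversed, semi-fast order), completeness together with semi-fast convergence forces
$$z_p=\sum_{n=1}^{p-1}z_n+1.$$
Applied to the first occurrence $z_{m-r+1}$ of the maximal value, this gives $\sum_{n=1}^{m-r}z_n=z_m-1$. This is exactly the identity used in the proof of Lemma \ref{poweroftwo}.

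Summing all the terms then gives
$$2^k-1=\sum_{n=1}^m z_n=(z_m-1)+rz_m=(r+1)z_m-1,$$
so $(r+1)z_m=2^k$. By Lemma \ref{poweroftwo} we have $z_m=2^p$ for some $p$. Hence $r+1=2^{k-p}$, that is, $r=2^{k-p}-1$.

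No step is a real obstacle; the only point needing care is justifying $\sum_{n\le m-r}z_n=z_m-1$. I would argue it in two directions.
\begin{itemize}
\item Completeness, via Brown's inequality at index $m-r+1$, gives $\le$.
\item The semi-fast condition gives $z_m>\sum_{n\le m-r}z_n$, and integrality turns this into $\ge$.
\end{itemize}
In the degenerate case $r=m$ (all terms equal), $z_m=1$ by completeness, so $p=0$ and $r=2^k-1$, which agrees with the formula.
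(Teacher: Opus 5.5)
Your proposal is correct and follows the paper's route: the corollary is read off from the identity $(r+1)z_m=2^k$ in the proof of Lemma \ref{poweroftwo}, which gives $r+1=2^{k-p}$ once $z_m=2^p$. Your justification of $\sum_{n\le m-r}z_n=z_m-1$ combines Brown's inequality with the strict semi-fast inequality and integrality, which is the same reasoning the paper gives in its discussion before the table.
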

\begin{exa}
 Let us illustrate the result from Corollary \ref{numberofrepetition} for $k=3$. \\
For $p=2$ we have the maximal element $2^p=4$ of the representations $(4,2,1)$ and $(4,1,1,1)$ repeated $r=2^{k-p}-1=1$ time. \\
For $p=1$ we have the maximal element $2^p=2$ of $(2,2,2,1)$ repeated $r=2^{k-p}-1=3$-times.
\\
For $p=0$ we have the maximal element $2^p=1$ of $(1,\ldots,1)$ repeated $r=2^{k-p}-1=7$-times.
\end{exa}

\begin{thm}\label{potegidwojek}
The equality $R(2^k-1)=2^{k-1}$ holds for each $k\in\mathbb{N}$.
\begin{proof}
The assertion is clear for $k\in\{1,2,3\}$. We will prove it inductively. Suppose that the equality $R(2^n-1)=2^{n-1}$ holds for every $n<k$. We count the summands of $R(2^{k}-1)$ due to the maximal element in the representation. 
Let it have the value $2^p$ for some $1\leq p<k$. Then by Corollary \ref{numberofrepetition} it is repeated exactly $r=2^{k-p}-1$-times, that is $z_m=z_{m-1}=\ldots=z_{m-r+1}=2^p$. Hence $\sum_{i=1}^{m-r}z_i=2^p-1$. The subsequence $(z_n)$ of the any initial terms of the complete, semi-fast representation $(z_n)_{n=1}^{m}$ is also semi-fast convergent and complete. Hence $(z_n)_{n=1}^{m-r}$ is complete and semi-fast convergent. Thus the number of representations of $\{0,1,\ldots,2^{k}-1\}$ with the maximal term equal to $2^p$ is the same as the number of representations of  $\{0,1,\ldots,2^{p}-1\}$. By the inductive assumpiton we know that $R(2^p-1)=2^{p-1}$. Note also that for $p=0$ there is unique constant, complete, semi-fast convergent sequence of $1$'s, which represents the set $\{0,1,\ldots,2^{k}-1\}$. Hence we obtain 
$$R(2^{k}-1)=1+\sum_{p=1}^{k-1}R(2^{p}-1)=1+\sum_{p=1}^{k-1}2^{p-1}=1+2^{k-1}-1=2^{k-1},$$
which finishes the proof.
\end{proof}
\end{thm}

Combing the results from Theorems \ref{niemainnychrozwiniec} and \ref{potegidwojek} we obtain the formula for the numbers of semi-fast represetations of Central Cantor sets. 

\begin{cor}
Let $E:=E(\alpha_i,2^{M_i}-1)$ be a Central Cantor set and $\alpha_i>2^{M_{i+1}}\alpha_{i+1}$ for each $i$. 
Then the number of semi-fast convergent representations of $E$ is equal to 
\begin{itemize}
\item $\mathfrak{c}$ if and only if infinite many of $M_i$'s are greater than $1$;
\item $\prod_{i=1}^{\infty} 2^{M_i-1}$ if $M_i=1$ for all but finitely many $i$'s.
\end{itemize}
\end{cor}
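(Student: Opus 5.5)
The plan is to reduce the count to a product of independent local counts and then apply Theorem \ref{potegidwojek} to each factor.

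First, by Theorem \ref{niemainnychrozwiniec} (its hypothesis $\alpha_i>(N_{i+1}+1)\alpha_{i+1}$ is exactly $\alpha_i>2^{M_{i+1}}\alpha_{i+1}$), every representation $(y_n)$ of $E$ decomposes into blocks. The $i$-th block consists of the multiples of $\alpha_i$. After dividing by $\alpha_i$ it is a complete sequence representing $\{0,1,\ldots,2^{M_i}-1\}$. If $(y_n)$ is moreover semi-fast convergent, then each scaled block, written non-decreasingly, is a complete semi-fast representation in the sense used for $R$. Conversely, take any choice of such complete semi-fast sequences $(z^{(i)})$, one for each $i$, and put $y=(\alpha_1 z^{(1)},\alpha_2 z^{(2)},\ldots)$ in reversed (non-increasing) order. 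Then $E(y)=\sum_i \alpha_i\{0,\ldots,N_i\}=E$. So the map from representations to sequences of local choices is a bijection onto $\prod_i \mathcal{R}_i$, where $\mathcal{R}_i$ is the set of semi-fast complete representations of $\{0,\ldots,2^{M_i}-1\}$. Distinct choices give distinct sequences because the blocks occupy disjoint value ranges: every multiple of $\alpha_i$ used is at most $N_i\alpha_i<\alpha_{i-1}$.

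The main obstacle is checking that the concatenation is globally semi-fast. This needs
\[ \beta>\sum_{k:\,y_k<\beta} y_k \quad \text{for each value } \beta=z\alpha_i .\]
The sum on the right splits into the part of block $i$ below $\beta$ plus the whole tail $\sum_{j>i}N_j\alpha_j$. Inside the block, the paper's discussion gives $z_p\ge \sum_{n<p}z_n+1$ whenever $z_p$ ends a constant segment. Scaling by $\alpha_i$, the in-block part is at most $\beta-\alpha_i$. The tail is less than $\alpha_i$ by the chain of inequalities derived from $\alpha_i>(N_{i+1}+1)\alpha_{i+1}$. Hence the strict inequality holds. The converse direction, that semi-fastness of $y$ forces each block to be locally semi-fast, follows because the tail is positive and the in-block values are integers multiplied by $\alpha_i$. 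So in the reversed convention the condition $z_p\ge\sum_{n<p}z_n+1$ is exactly the local semi-fast condition.

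Finally, Theorem \ref{potegidwojek} gives $|\mathcal{R}_i|=R(2^{M_i}-1)=2^{M_i-1}$, so the number of representations is $\prod_i 2^{M_i-1}$. Every factor is $\ge1$, and a factor is $\ge2$ exactly when $M_i>1$. If $M_i=1$ for all but finitely many $i$, the product is finite and equals $\prod_{i} 2^{M_i-1}$. If infinitely many $M_i>1$, the product set contains a copy of $\{0,1\}^{\mathbb{N}}$, so it has cardinality at least $\mathfrak{c}$. The upper bound $\mathfrak{c}$ holds because representations are real sequences. This gives both items of the corollary.
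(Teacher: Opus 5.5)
Your proposal is correct and takes the same route as the paper, which obtains the corollary by combining Theorem~\ref{niemainnychrozwiniec} (every representation splits into scaled complete blocks, one for each $\alpha_i$) with Theorem~\ref{potegidwojek} (each block admits $R(2^{M_i}-1)=2^{M_i-1}$ semi-fast choices). You also make explicit what the paper leaves implicit: the bijection with $\prod_i\mathcal{R}_i$, the check that concatenating locally semi-fast blocks gives a globally semi-fast sequence, and the cardinality count that yields $\mathfrak{c}$ or a finite product. In that check, the inequality $z_p\ge\sum_{n<p}z_n+1$ must be read at the first occurrence of each value in non-decreasing order.
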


Now we consider the values of the function $R$ on the whole $\mathbb{N}$ . Using the notation from Lemma \ref{poweroftwo} when we consider the set $\{0,1,2,\ldots,n\}$ we ask about the number of solutions of the equation 
$$n+1=(r+1)z_m$$
Hence we may generalize the result from Theorem \ref{potegidwojek} as follows
\begin{thm}\label{generalformula}
Let $n\in\mathbb{N}$ and $D_{n+1}=\{d_1,d_2,\ldots,d_t\}$ be the set of all proper divisors of $n+1$.
Then the number of complete, semi-fast convergent representations of $\{0,1,2,\ldots,n\}$ can be calculated by the formula
$$R(n)=\sum_{i=1}^{t} R(d_i-1).$$
\begin{proof}
Let $(z_k)$ be a complete, semi-fast representation of $\{0,1,2,\ldots,n\}$. 
By Lemma \ref{poweroftwo} the maximal value in the representation $z_m$ need to be a divisor of $n+1$. If $z_m=n+1$ then we take this element $r=0$ times, a contradiction, so $z_m$ need to be the proper divisor of $n+1$. Let $z_m=d_i$ for some $i\in\{1,\ldots,t\}$. Then after removing all $r$ maximal elements from the sequence $(z_k)$ the remaining sequence forms a complete, semi-fast convergent representation of $\{0,1,2,\ldots,z_m-1\}$. The number of all such representations is equal to $R(z_m-1)=R(d_i-1)$. We need to sum up all representations by considering all possible values for its maximal term. Note that if $d_i=1$ then the representation is a unique constant sequence of $1$'s, so we put $R(0)=1$. Thus we obtain the formula $R(n)=\sum_{i=1}^{t} R(d_i-1)$.
\end{proof}
\end{thm}

\begin{exa}
We show how to calculate the number of semi-fast convergent, complete representations of the set $\{0,1,2,\ldots,34\}$, that is the value $R(34)$. Let us consider the set of all proper divisors of $35$, that is $D_{35}=\{1,5,7\}$. Hence by the formula given in Theorem \ref{generalformula} we obtain 
$$R(34)=R(0)+R(4)+R(6)=1+1+1=3.$$
\end{exa}

\begin{cor}
Let $n\in\mathbb{N}$. The equality $R(n)=1$ holds if and only if $n+1$ is a prime number. Indeed by the formula in Theorem \ref{generalformula} the number $n+1$ should have only one proper divisor, that is $D_{n+1}=\{1\}$, which is equivalent to that $n+1$ is a prime number. 
\end{cor}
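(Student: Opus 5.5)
The plan is to show both implications using the recursion $R(n)=\sum_{d\in D_{n+1}}R(d-1)$ from Theorem \ref{generalformula}, together with the fact that $R(j)\geq 1$ for every $j\geq 0$. That fact holds because the constant sequence of $j$ ones is always a complete, semi-fast representation of $\{0,1,\ldots,j\}$, and $R(0)=1$ by convention. With positivity in hand, $R(n)$ is a sum of $|D_{n+1}|$ terms, each at least $1$. Hence $R(n)\geq |D_{n+1}|$, and more precisely $R(n)=1$ holds exactly when the sum has a single term and that term equals $1$.

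For the direction ``$n+1$ prime $\Rightarrow R(n)=1$'': if $n+1$ is prime, its only proper divisor is $1$. So $D_{n+1}=\{1\}$, and the formula gives $R(n)=R(0)=1$.

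For the converse, suppose $n+1$ is not prime. Here $n\in\mathbb{N}$, so $n+1\geq 2$, and $n+1$ must be composite. Then it has a proper divisor $d$ with $1<d<n+1$, so $D_{n+1}\supseteq\{1,d\}$ has at least two elements. Positivity then gives $R(n)\geq R(0)+R(d-1)\geq 2$.

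The only step needing care is the positivity claim, and it is where I would check the definitions. I would verify that a sequence of ones counts as ``semi-fast'' in the sense used here, namely a single constant segment whose last term $z_p=1$ satisfies $z_p=\sum_{i<p}z_i+1$ for $p=1$. I would also check that ``proper divisor'' is taken to include $1$ and exclude $n+1$, as in the proof of Theorem \ref{generalformula}. With that settled, the equivalence is immediate.
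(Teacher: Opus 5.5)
Your proof is correct and takes the same route as the paper: apply the recursion of Theorem~\ref{generalformula} and observe that $R(n)=1$ forces $D_{n+1}=\{1\}$. You also make explicit two facts that the paper's one-line argument uses tacitly: the positivity $R(j)\geq 1$ (from the constant sequence of ones and $R(0)=1$), and the restriction $n+1\geq 2$ coming from $n\in\mathbb{N}$.
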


\begin{cor}
Let $(\alpha_i,{N_i})$ be a semi-fast convergent sequence and $\alpha_i>(N_{i+1}+1)\alpha_{i+1}$ for each $i$. 
Then the number of semi-fast convergent representations of the achievement set $E(\alpha_i,{N_i})$ is equal to 
\begin{itemize}
\item $\mathfrak{c}$ if and only if infinite many of $N_i+1$ are not prime numbers;
\item $\prod_{i=1}^{\infty} R(N_i)$ if all but finitely many of $N_i+1$ are prime numbers;
\end{itemize}
In particular the set of subsums  $E(\alpha_i,{N_i})$ has a unique semi-fast representation given by constant segments of $\alpha_i$'s if and only if  $N_i+1$ is a prime number for each $i$. 
\end{cor}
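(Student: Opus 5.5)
The plan is to combine Theorem \ref{niemainnychrozwiniec} with Theorem \ref{generalformula}, through a bijection between semi-fast representations of $E(\alpha_i,N_i)$ and sequences of choices, one per block.

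\textbf{Step 1: decomposition of an arbitrary representation.} Let $(y_n)$ be a semi-fast convergent representation of $E(\alpha_i,N_i)$. By Lemma \ref{LematOCentrumDystansow} and the description of the center of distances, every term of $(y_n)$ is a multiple $k\alpha_i$ with $1\le k\le N_i$. I first check that these families are disjoint: the inequality $\alpha_i>(N_{i+1}+1)\alpha_{i+1}$ gives $N_{i+1}\alpha_{i+1}<\alpha_i$, so every multiple of $\alpha_{i+1}$ that occurs is below $\alpha_i$, and inductively every multiple of a later $\alpha_j$ is below $\alpha_i$ as well. Theorem \ref{niemainnychrozwiniec} then says that, for each $i$, the multiples of $\alpha_i$ in $(y_n)$ sum to $N_i\alpha_i$ and, after scaling by $\alpha_i$, form a complete sequence for $\{0,\dots,N_i\}$.

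\textbf{Step 2: semi-fast blocks and counting.} I must show that each scaled block is itself a semi-fast complete representation of $\{0,\dots,N_i\}$, which is the class counted by $R(N_i)$. This follows from the semi-fast inequality for $(y_n)$ restricted to the terms of that block. The tail of later blocks has sum below $\alpha_i$, i.e.\ below the unit $1$ after scaling, and I use this to justify that the block condition is exactly the one discussed before the table.

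Conversely, suppose that for each $i$ we independently pick a semi-fast complete representation of $\{0,\dots,N_i\}$, scale it by $\alpha_i$, and concatenate the blocks. The achievement set is unchanged, since block $i$ has subsum set $\{0,\alpha_i,\dots,N_i\alpha_i\}=F$ of the original segment. The resulting sequence is semi-fast convergent: within a block this follows from the block's own semi-fast property plus a tail below $\alpha_i$, and across blocks it follows from $\alpha_i>\sum_{j>i}N_j\alpha_j$.

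Together these give a bijection between representations and choice sequences in $\prod_i\{\text{representations of }\{0,\dots,N_i\}\}$. Hence the number of representations is $\prod_i R(N_i)$.

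\textbf{Step 3: evaluating the product.} By the previous corollary, $R(N_i)=1$ iff $N_i+1$ is prime, and $R(N_i)\ge2$ otherwise. If $N_i+1$ is composite for infinitely many $i$, the product of finitely bounded factors, each at least $2$ infinitely often, has cardinality $\mathfrak c$. It cannot exceed $\mathfrak c$, since every factor is finite and there are countably many of them. If $N_i+1$ is prime for all but finitely many $i$, the product is the finite number $\prod_i R(N_i)$. The uniqueness statement is the case where every factor equals $1$.

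The main obstacle is Step 2: showing that the semi-fast condition on $(y_n)$ forces exactly the block-level semi-fast condition, and conversely. In particular I must check that ties between blocks cannot occur, and that the scaled tail, being less than $1$, neither adds nor removes admissible block patterns. Everything else is direct bookkeeping from Theorems \ref{niemainnychrozwiniec} and \ref{generalformula}.
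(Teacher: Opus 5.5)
Your proposal is correct and follows the paper's route. Theorem \ref{niemainnychrozwiniec} splits any representation into independent blocks, the scaled tail in $(0,1)$ makes the block-level semi-fast condition exactly the one counted by $R(N_i)$, and the prime criterion derived from Theorem \ref{generalformula} evaluates $\prod_i R(N_i)$.

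One small correction: the center-of-distances formula you invoke in Step 1 is stated in the paper only for central Cantor sets, and you do not need it. The proof of Theorem \ref{niemainnychrozwiniec} already gives $F_m(y_n)=\{0,\alpha_1,\dots,N_1\alpha_1\}$ block by block, so every $y_n$ equals some $k\alpha_i$ with $1\le k\le N_i$. Alternatively, the block sums $N_i\alpha_i$ already exhaust $\sum_n y_n=\max E=\sum_i N_i\alpha_i$, so no term lies outside the blocks.
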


Now we want to calculate the number of all representations of $E(\alpha_i,{N_i})$ satisfying assumption $\alpha_i>(N_{i+1}+1)\alpha_{i+1}$ for each $i$. It means that we count any $(y_n)$ such that $E(y_n)=E(\alpha_i,{N_i})$ with no longer assuming that the sequence $(y_n)$ is semi-fast convergent. By Theorem \ref{niemainnychrozwiniec} we need to find the number of all complete integer partitions of the given natural number $n$ - we denote it by $C(n)$. If we sum up all of the representations from both columns ''Semi-fast generator'' and ''Others'' in the Table we see that $C(3)=2$, $C(4)=2$, $C(5)=4$, $C(6)=5$, $C(7)=8$. We also have $C(1)=C(2)=1$.  

The reccurence formula for the values of the function $C(n)$ is given in \cite{Park}. Hence we obtain 
\begin{cor}
Let $(\alpha_i,{N_i})$ be a semi-fast convergent sequence and $\alpha_i>(N_{i+1}+1)\alpha_{i+1}$ for each $i$. 
Then the number of all representations of the achievement set $E(\alpha_i,{N_i})$ is equal to 
\begin{itemize}
\item $\mathfrak{c}$ if and only if infinite many of $N_i$ are greater than $2$;
\item $\prod_{i=1}^{\infty} C(N_i)$ if and only if all but finitely many of $N_i$ are equal to $1$ either $2$;
\end{itemize}
\end{cor}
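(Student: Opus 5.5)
By Theorem \ref{niemainnychrozwiniec}, every representation $(y_n)$ of $E(\alpha_i,N_i)$ splits into consecutive blocks. The $i$-th block consists exactly of the multiples of $\alpha_i$ occurring in $(y_n)$. After dividing by $\alpha_i$, it is a complete sequence whose subsums are $\{0,1,\ldots,N_i\}$, i.e.\ a complete partition of $N_i$. Conversely, the replacement argument described before Theorem \ref{niemainnychrozwiniec} shows that choosing, independently for each $i$, an arbitrary complete partition of $N_i$ and scaling it by $\alpha_i$ gives a sequence with the same achievement set. The reason is that $E(y_n)=\sum_i \alpha_i\{0,1,\ldots,N_i\}$ depends only on the sets $\{0,\ldots,N_i\}$.

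The plan is therefore to set up a bijection between representations of $E(\alpha_i,N_i)$ and sequences $(P_i)$, where $P_i$ is a complete partition of $N_i$. Two points need checking:
\begin{itemize}
\item distinct choices of $(P_i)$ give distinct non-increasing sequences $(y_n)$;
\item every representation arises this way.
\end{itemize}
For the first, the blocks occupy disjoint value ranges, since multiples of $\alpha_i$ in block $i$ are at most $N_i\alpha_i<\alpha_{i-1}$. Hence the multiset of terms determines each $P_i$. The second is exactly Theorem \ref{niemainnychrozwiniec}. One subtlety is that a term might be a multiple of several $\alpha_j$. Here one uses that the block sums are forced to equal $N_i\alpha_i$ and that the center of distances lemma confines the terms to $\{k\alpha_i\}$ with $k\le N_i$, which determines the block uniquely. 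Consequently the number of representations equals $\prod_i C(N_i)$ as a cardinal product.

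It remains to evaluate this product. First, $C(N)=1$ if and only if $N\in\{1,2\}$. Indeed, $(1)$ is the only partition of $1$. For $2$, the partition $(2)$ is not complete because $1$ is not a subsum, so $(1,1)$ is the only one. For $N\ge 3$ there are at least two complete partitions: the constant one $(1,\ldots,1)$ and $(2,1,\ldots,1)$, which is complete by Brown's criterion $z_p\le\sum_{n<p}z_n+1$.

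If all but finitely many $N_i\in\{1,2\}$, the product has only finitely many factors different from $1$, so it is finite and equals $\prod_i C(N_i)$. If infinitely many $N_i\ge 3$, choosing independently one of two partitions in each such block injects $\{0,1\}^{\mathbb N}$ into the set of representations. The upper bound is $\mathfrak c$, since representations are real sequences. This gives exactly $\mathfrak c$. The two conditions are complementary and yield respectively $\mathfrak c$ and a finite number, so both "if and only if" statements follow.

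The main obstacle is the injectivity and surjectivity of the block decomposition. Theorem \ref{niemainnychrozwiniec} is stated somewhat loosely ("the multiples of $\alpha_i$"), so the care goes into showing that the decomposition of $(y_n)$ into blocks is canonical. The plan is to use the inequality $\alpha_{i-1}>(N_i+1)\alpha_i$, which separates the value ranges of consecutive blocks, and then read off each block directly.
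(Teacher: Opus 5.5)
Your proposal is correct and follows the paper's route. Theorem \ref{niemainnychrozwiniec} reduces every representation to an independent choice, for each $i$, of a complete partition of $N_i$ scaled by $\alpha_i$, so the count is $\prod_i C(N_i)$, with $C(N)=1$ exactly when $N\in\{1,2\}$. Your extra care in making the blocks canonical, by separating value ranges via $\alpha_{i-1}>(N_i+1)\alpha_i$, fills in what the paper leaves implicit. One correction: the fact that the $i$-th block consists of terms in $\{\alpha_i,\ldots,N_i\alpha_i\}$ should be taken from $F_m(y_n)=\{0,\alpha_1,\ldots,N_1\alpha_1\}$ and the inductive peeling in that theorem's proof. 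It does not come from the center-of-distances formula, which the paper only states for central Cantor sets.
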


 \section*{Funding Declaration} 
This research received no external funding. 

\section*{Conflict of Interest} I confirm that there is no conflict of interest.

 \section*{Data Availability} Data sharing is not applicable to this article as no datasets were generated or analyzed during the current study.

\end{document}